\newtheorem{theorem}{Theorem}[section]
\newtheorem{lemma}[theorem]{Lemma}
\newtheorem{corollary}[theorem]{Corollary}
\newtheorem{definition}[theorem]{Definition}
\newtheorem{remark}[theorem]{Remark}
\numberwithin{equation}{section}
\def\Rn{{\mathbb{R}^n}}
\def\a {\alpha}
\def\L1loc{L_{\Phi}^{\rm loc}(\Rn)}
\begin{document}

\begin{center}
\Large Some notes on commutators of the fractional maximal and Riesz potential operators on Orlicz spaces
\end{center}

\

\centerline{\large Vagif S. Guliyev$^{a}$, Fatih Deringoz$^{b,}$\footnote{
		Corresponding author.
		\\
		E-mail adresses: vagif@guliyev.com (V.S. Guliyev), deringoz@hotmail.com (F. Deringoz), sabhasanov@gmail.com (S.G. Hasanov).}, Sabir G. Hasanov$^{c}$ }

\

\centerline{$^{a}$\it Institute of Applied Mathematics, Baku State University, Baku, AZ 1148 Azerbaijan}

%\centerline{$^{b}$\it Institute of Mathematics and Mechanics of NAS of Azerbaijan, AZ1141 Baku, Azerbaijan}

%\centerline{$^{b}$\it Peoples Friendship University of Russia (RUDN University)}
%\centerline{$~~$ \it 6 Miklukho-Maklaya St, Moscow, 117198, Russian Federation}

\centerline{$^{b}$\it Department of Mathematics, Ahi Evran University, Kirsehir, Turkey}

\centerline{$^{c}$\it Ganja State University, Ganja, Azerbaijan}

\

\

\begin{abstract}
The main focus of this paper is commutators and maximal commutators on Orlicz spaces for fractional maximal functions and Riesz potential. The main advance in comparison with the existing results is that we manage to obtain conditions for the boundedness in less restrictive terms.
\end{abstract}

\

\noindent{\bf Mathematics Subject Classification:} $~~$ 42B20, 42B25, 42B35, 46E30

\noindent{\bf Key words:} {Commutator; Orlicz space; fractional maximal function; Riesz potential, $BMO$ space.}

\

\section{Introduction}\label{s1}

Norm inequalities for several classical operators of harmonic analysis have been widely studied in the context of Orlicz spaces. It is well known that many of such operators fail to have continuity properties when they act between certain Lebesgue spaces and, in some situations, the Orlicz spaces appear as adequate substitutes.

The fractional maximal operator $M_{\a}$, $0\le\a<n$ and the Riesz potential operator $I_{\a}$, $0<\a<n$ are  defined by
$$
M_{\a} f(x)=\sup_{t>0}|B(x,t)|^{-1+ \frac{\a}{n}}\int _{B(x,t)} |f(y)|dy,~~
I_{\a} f(x)=\int _{\Rn} \frac{f(y)}{|x-y|^{n-\a}}dy.
$$
Here and everywhere in the sequel $B(x,r)$ is the ball in $\Rn$ of radius $r$ centered
at $x$ and  $|B(x,r)|=v_n r^n$ is its Lebesgue measure, where $v_n$ is the volume of the unit ball in $\Rn$.
If $\a=0$, then $M \equiv M_{0}$ is the well known Hardy-Littlewood maximal operator. Recall that, for $0<\alpha<n$,
\begin{equation}\label{eq001pws}
	M_{\alpha}f(x)\leq v_n^{\frac{\a}{n}-1} \, I_{\a}(|f|)(x).
\end{equation}

We also define the sharp maximal function as
\begin{align*}
	M^{\sharp} f(x)=\sup_{t>0}|B(x,t)|^{-1}\int _{B(x,t)} |f(y)-f_{B(x,t)}|dy,
\end{align*}
where $f_{B(x,t)} = |B(x,t)|^{-1} \int _{B(x,t)} f(y) dy$.

The commutators generated by  a suitable function $b$ and the operators $M_{\a}$ and  $I_{\a}$ are formally defined by
\begin{equation*}
	[b,M_{\a}]f = b \, M_{\a}f - M_{\a}(bf),~~
	[b,I_{\a}]f = b \, I_{\a}f - I_{\a}(bf),
\end{equation*}
respectively.

Given a measurable function $b$ the operators $M_{b,\a}$ and $|b,I_{\a}|$ are defined by
\begin{equation*}
	M_{b,\a}f(x)=\sup_{t>0}|B(x,t)|^{-1+\frac{\a}{n}} \, \int _{B(x,t)}|b(x)-b(y)||f(y)|dy
\end{equation*}
and
\begin{equation*}
	|b,I_{\a}|f(x)=\int _{\Rn}\frac{|b(x)-b(y)|}{|x-y|^{n-\a}}f(y)dy,
\end{equation*}
respectively. If $\alpha=0$, then $M_{b,0}\equiv M_{b}$ is called the maximal commutator. Recall that, for $0<\alpha<n$,
\begin{equation} \label{fgs01}
	M_{b,\a}f(x) \leq v_n^{\frac{\a}{n}-1} \, |b,I_{\a}|(|f|)(x)
\end{equation}
and
\begin{equation*}
	\big|[b,I_{\a}]f(x)\big| \leq  |b,I_{\a}|(|f|)(x).
\end{equation*}

Suppose that $b\in L^1_{\rm loc}(\Rn)$. Then $b$ is said to be in $BMO(\Rn)$ if the seminorm given by
\begin{equation*}
	\|b\|_{\ast}=\sup_{x\in\Rn, r>0}\frac{1}{|B(x,r)|}
	\int_{B(x,r)}|b(y)-b_{B(x,r)}|dy
\end{equation*}
is finite.

The following theorem is valid.
\begin{theorem}\label{rem01} \cite{Chanillo, DGSDebr2017}
	Let $0<\a <n$, $1< p < \frac{n}{\a}$ and $\frac{1}{q}=\frac{1}{p}-\frac{\a}{n}$. Then $M_{b,\a}$, $[b,I_{\a}]$ and $|b,I_{\a}|$ are bounded operators from $L^p(\Rn)$ to $L^q(\Rn)$ if and only if $b\in  BMO(\Rn)$.
\end{theorem}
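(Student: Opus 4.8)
\noindent The plan is to establish the two implications separately and, for the boundedness direction, to reduce all three operators to a single one. For the sufficiency ($b\in BMO(\Rn)$ implies boundedness) I would first note that, by \eqref{fgs01} and the pointwise estimate $|[b,I_\a]f(x)|\le|b,I_\a|(|f|)(x)$ stated just below it, both $M_{b,\a}f$ and $[b,I_\a]f$ are controlled pointwise by $|b,I_\a|(|f|)$; hence it suffices to prove that the positive operator $|b,I_\a|$ maps $L^p(\Rn)$ into $L^q(\Rn)$. For this I would run a Fefferman--Stein sharp-function argument in the spirit of Chanillo. Fixing a ball $B=B(x_0,r)$ and writing $f=f\chi_{2B}+f\chi_{(2B)^c}=:f_1+f_2$, I would estimate the local oscillation $\tfrac1{|B|}\int_B\big||b,I_\a|f(z)-c_B\big|\,dz$ with $c_B:=I_\a(|b-b_B|f_2)(x_0)$, after replacing $b$ by $b-b_B$ and using $|b(z)-b(y)|\le|b(z)-b_B|+|b(y)-b_B|$.

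This produces three contributions. The ``oscillation'' term $\tfrac1{|B|}\int_B|b(z)-b_B|\,I_\a f(z)\,dz$ is handled by Hölder's inequality together with the John--Nirenberg inequality, which gives $\big(\tfrac1{|B|}\int_B|b-b_B|^{s'}\big)^{1/s'}\lesssim\|b\|_{\ast}$ and leaves $\|b\|_{\ast}\,(M(|I_\a f|^{s}))^{1/s}(x)$. The ``near'' term coming from $f_1$ is handled by the weak $(1,\tfrac{n}{n-\a})$ boundedness of $I_\a$ via Kolmogorov's inequality, followed once more by Hölder and John--Nirenberg, yielding $\|b\|_{\ast}\,(M_{s\a}(|f|^{s}))^{1/s}(x)$; here the reverse triangle inequality $\big||b(z)-b(y)|-|b(y)-b_B|\big|\le|b(z)-b_B|$ is what lets the positive operator be treated exactly as the signed commutator. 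The ``far'' term is controlled through the smoothness of the Riesz kernel, producing a dyadic series $\sum_k k\,2^{-k}$ that converges to another multiple of $(M_{s\a}(|f|^{s}))^{1/s}(x)$. Fixing any $1<s<p$ and invoking the Fefferman--Stein inequality, the boundedness then follows from the Hardy--Littlewood--Sobolev estimate $I_\a:L^p\to L^q$, the maximal bound $M:L^{q/s}\to L^{q/s}$ and the fractional maximal bound $M_{s\a}:L^{p/s}\to L^{q/s}$; the decisive point is that the relation $\tfrac1q=\tfrac1p-\tfrac\a n$ is invariant under the rescaling $(p,q,\a)\mapsto(p/s,q/s,s\a)$, and that $p<\tfrac n\a$ is exactly what guarantees $p/s<\tfrac n{s\a}$.

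For the necessity I would test on characteristic functions of balls. Given $B=B(x_0,r)$ and $x\in B$, the bound $\int_B|b(x)-b(y)|\,dy\ge\big|\int_B(b(x)-b(y))\,dy\big|=|B|\,|b(x)-b_B|$, together with the comparability of the averaging ball (resp.\ the kernel) to $r$, yields the pointwise lower bounds $M_{b,\a}(\chi_B)(x)\gtrsim|B|^{\a/n}|b(x)-b_B|$ and $|b,I_\a|(\chi_B)(x)\gtrsim|B|^{\a/n}|b(x)-b_B|$. Taking $L^q(B)$ norms, using Jensen's inequality $\big(\tfrac1{|B|}\int_B|b-b_B|^q\big)^{1/q}\ge\tfrac1{|B|}\int_B|b-b_B|$, and comparing with the assumed bound $\le C\|\chi_B\|_{L^p}=C|B|^{1/p}$, every power of $|B|$ cancels precisely because $\tfrac1p-\tfrac1q=\tfrac\a n$, leaving $\tfrac1{|B|}\int_B|b-b_B|\le C$ uniformly in $B$; hence $b\in BMO(\Rn)$.

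The genuinely delicate case is the necessity for the signed commutator $[b,I_\a]$, where the cancellation between $b\,I_\a f$ and $I_\a(bf)$ destroys the positivity exploited above. Here I would use the two-ball device of Janson: choose $\widetilde B=B(y_0,r)$ with $|x_0-y_0|\sim r$ so that for $x\in B$ and $y\in\widetilde B$ the kernel $|x-y|^{\a-n}$ has a fixed sign, is comparable to $r^{\a-n}$, and oscillates little across $\widetilde B$. Testing on $\chi_{\widetilde B}$ and splitting $b(x)-b(y)=(b(x)-b_{\widetilde B})+(b_{\widetilde B}-b(y))$, the second summand integrates against an almost constant kernel and is absorbed into an error of size $\lesssim\|b\|_{\ast}$, while the first reproduces the factor $|B|^{\a/n}|b(x)-b_{\widetilde B}|$, after which the argument proceeds as before. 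I expect making this last step rigorous---quantifying the kernel oscillation over $\widetilde B$ so that it genuinely absorbs the error term---to be the main obstacle; the sufficiency direction, though longer, is a fairly mechanical assembly of the sharp-function, Kolmogorov and John--Nirenberg estimates.
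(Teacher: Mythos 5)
Your sufficiency argument and your necessity argument for the two positive operators $M_{b,\a}$ and $|b,I_{\a}|$ are correct, and they follow the same circle of ideas the paper relies on. Note that the paper never proves Theorem \ref{rem01} itself --- it is quoted from \cite{Chanillo} and \cite{DGSDebr2017} --- but its own Orlicz machinery specializes to exactly your scheme: the pointwise sharp-function estimate of Lemma \ref{Vak02} combined with the Fefferman--Stein type inequality of Lemma \ref{Vak01} gives sufficiency (the paper uses Shirai's form $I_{\a}|f|+I_{\a r}(|f|^{r})^{1/r}$ of the sharp-function bound where you use Chanillo's form $M(|I_{\a}f|^{s})^{1/s}+M_{s\a}(|f|^{s})^{1/s}$; both close the argument, modulo the standard density remark needed to apply Fefferman--Stein, which the paper handles via $C_{0}^{\infty}$ and $\Delta_2$), and the necessity in part 2 of Theorem \ref{CommFrMaxCharOr01} is precisely your characteristic-function test, written with the Orlicz--H\"older inequality instead of Jensen. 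Your scaling bookkeeping ($(p,q,\a)\mapsto(p/s,q/s,s\a)$ preserves the Sobolev relation) is also right.

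The genuine gap is in the necessity for the signed commutator $[b,I_{\a}]$, exactly where you flagged it, and it is worse than a technical obstacle: as written, the argument is circular. After splitting $b(x)-b(y)=(b(x)-b_{\widetilde B})+(b_{\widetilde B}-b(y))$ you absorb the second term ``into an error of size $\lesssim\|b\|_{\ast}$''; but $\|b\|_{\ast}$ is the quantity whose finiteness you are trying to prove, so it may not appear on the right-hand side of any estimate at this stage. The known repair runs as follows. Bound the error by the \emph{single local} oscillation $O(\widetilde B)=\frac{1}{|\widetilde B|}\int_{\widetilde B}|b-b_{\widetilde B}|$, which is finite for each fixed ball because $b\in L^{1}_{\rm loc}(\Rn)$: since $\int_{\widetilde B}(b_{\widetilde B}-b(y))\,dy=0$, you may replace the kernel $K(x-y)=|x-y|^{\a-n}$ by $K(x-y)-K(x-y_{0})$, and if the separation is $|x_{0}-y_{0}|=Ar$ with $A$ large, the gradient bound gives $|K(x-y)-K(x-y_{0})|\lesssim A^{-1}(Ar)^{\a-n}$, a gain of $A^{-1}$ over the main term $\int_{\widetilde B}K(x-y)\,dy\approx A^{\a-n}r^{\a}$. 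Averaging over $B$, using H\"older and the assumed $L^{p}\to L^{q}$ bound (all powers of $r$ cancel as in your positive-operator argument), one gets $O(B)\le C_{1}A^{n-\a}+C_{2}A^{-1}O(\widetilde B)$, and by symmetry the same inequality with $B$ and $\widetilde B$ interchanged; because both quantities are finite, substituting one inequality into the other and fixing $A$ with $C_{2}^{2}A^{-2}\le\frac12$ absorbs $O(B)$ and yields $\sup_{B}O(B)<\infty$. (Alternatively, Chanillo's and Janson's actual device expands $|z|^{n-\a}=1/K(z)$ in an absolutely convergent Fourier series near a point $z_{0}\neq0$, which bypasses the splitting and its error term entirely.) It is worth noting that this is also the one part of Theorem \ref{rem01} that the paper itself obtains only by citation to \cite{Chanillo}: its own results (Corollary after Theorem \ref{AdamsCommFrCharOrl}, and Theorem \ref{CommFrMaxCharOr01}) recover everything except the necessity for $[b,I_{\a}]$.
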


The proof of Theorem \ref{rem01} for $[b,I_{\a}]$ was given in \cite{Chanillo} and for $M_{b,\a}$ and $|b,I_{\a}|$ was given in \cite{DGSDebr2017}.
The problem of the boundedness of those commutators in Orlicz spaces was investigated in papers \cite{GulDerHasJIA,GulDerHasMathN} by the authors. Here we solve this problem with a different approach and refine our previous results. More precisely, we manage to obtain the boundedness of $M_{b,\a}$ and $[b,I_{\a}]$ in Orlicz spaces under weaker conditions in comparison with the existing results. We also improve our results on boundedness of the Riesz potential in the Orlicz spaces and the corresponding weak-type version.

By $A \lesssim B$ we mean that $A \le C B$ with some positive constant $C$
independent of appropriate quantities. If $A \lesssim B$ and $B \lesssim A$, we
write $A\approx B$ and say that $A$ and $B$ are  equivalent.

\section{Preliminaries}
First, we recall the definition of Young functions.
\begin{definition}\label{def2} A function $\Phi : [0,\infty) \rightarrow [0,\infty]$ is called a Young function if $\Phi$ is convex, left-continuous, $\lim\limits_{r\rightarrow +0} \Phi(r) = \Phi(0) = 0$ and $\lim\limits_{r\rightarrow \infty} \Phi(r) = \infty$.
\end{definition}
From the convexity and $\Phi(0) = 0$ it follows that any Young function is increasing.
If there exists $s \in  (0,\infty )$ such that $\Phi(s) = \infty $,
then $\Phi(r) = \infty $ for $r \geq s$.
The set of  Young  functions such that
\begin{equation*}
	0<\Phi(r)<\infty \qquad \text{for} \qquad 0<r<\infty
\end{equation*}
will be denoted by  $\mathcal{Y}.$
If $\Phi \in  \mathcal{Y}$, then $\Phi$ is absolutely continuous on every closed interval in $[0,\infty )$
and bijective from $[0,\infty )$ to itself.

For a Young function $\Phi$ and  $0 \leq s \leq \infty $, let
$$\Phi^{-1}(s)=\inf\{r\geq 0: \Phi(r)>s\}.$$
If $\Phi \in  \mathcal{Y}$, then $\Phi^{-1}$ is the usual inverse function of $\Phi$.

It is well known that
\begin{equation}\label{2.3}
	r\leq \Phi^{-1}(r)\widetilde{\Phi}^{-1}(r)\leq 2r \qquad \text{for } r\geq 0,
\end{equation}
where $\widetilde{\Phi}(r)$ is defined by
\begin{equation*}
	\widetilde{\Phi}(r)=\left\{
	\begin{array}{ccc}
		\sup\{rs-\Phi(s): s\in  [0,\infty )\}
		& , & r\in  [0,\infty ), \\
		\infty &,& r=\infty .
	\end{array}
	\right.
\end{equation*}

A Young function $\Phi$ is said to satisfy the
$\Delta_2$-condition, denoted also as   $\Phi \in  \Delta_2$, if
$$
\Phi(2r)\le C\Phi(r), \qquad r>0
$$
for some $C>1$. If $\Phi \in  \Delta_2$, then $\Phi \in  \mathcal{Y}$. A Young function $\Phi$ is said to satisfy the $\nabla_2$-condition, denoted also by  $\Phi \in  \nabla_2$, if
$$
\Phi(r)\leq \frac{1}{2C}\Phi(Cr),\qquad r\geq 0
$$
for some $C>1$.

Let $L^0({\mathbb R}^n)$ be the set of all measurable functions.
\begin{definition} (Orlicz Space). For a Young function $\Phi$, the set
	$$L^{\Phi}(\Rn)=\left\{f\in  L^0(\Rn): \int _{\Rn}\Phi(k|f(x)|)dx<\infty
	\text{ for some $k>0$  }\right\}$$
	is called Orlicz space.
	
\end{definition}
$L^{\Phi}(\Rn)$ is a Banach space with respect to the norm, which is called Luxemburg-Nakano norm
$$\|f\|_{L^{\Phi}}=\inf\left\{\lambda>0:\int _{\Rn}\Phi\Big(\frac{|f(x)|}{\lambda}\Big)dx\leq 1\right\}.
$$

If $\Phi(r)=r^{p},\, 1\le p<\infty $, then $L^{\Phi}(\Rn)=L^{p}(\Rn)$. If $\Phi(r)=0,\,(0\le r\le 1)$ and $\Phi(r)=\infty ,\,(r> 1)$, then $L^{\Phi}(\Rn)=L^\infty (\Rn)$.

\begin{definition} For a Young function $\Phi$, the weak Orlicz space
	$$
	{\rm W}L^{\Phi}(\mathbb{R}^{n})=\{f\in L^{0}(\mathbb{R}^{n}):\Vert f\Vert_{{\rm W}L^{\Phi}}<\infty\}
	$$
	is defined by the quasi-norm
	$$
	\Vert f\Vert_{{\rm W}L^{\Phi}}=\sup_{\lambda>0} \| \lambda\, \chi_{_{(\lambda,\infty)}}(|f|)\|_{L^{\Phi}}.
	%\| f\|_{{\rm W}L^{\Phi}}=\inf\Big\{\lambda>0\ :\ \sup_{t>0}\Phi(t)m\Big(\frac{f}{\lambda},\ t\Big)\ \leq 1\Big\}.
	$$
\end{definition}

By elementary calculations we have the following property.
\begin{lemma}\label{charorlc}
	Let $\Phi$ be a Young function and $B$ be a set in $\mathbb{R}^n$ with finite Lebesgue measure. Then
	\begin{equation*}
		\|\chi_{_B}\|_{L^{\Phi}} = \|\chi_{_B}\|_{WL^{\Phi}}=\frac{1}{\Phi^{-1}\left(|B|^{-1}\right)}.
	\end{equation*}
\end{lemma}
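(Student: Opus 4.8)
The plan is to compute both norms directly from their definitions, exploiting that $\chi_B$ takes only the values $0$ and $1$ together with $\Phi(0)=0$. First I would treat the Luxemburg norm. Since $\Phi(\chi_B(x)/\lambda)$ equals $\Phi(1/\lambda)$ on $B$ and $\Phi(0)=0$ off $B$, the modular integral collapses to $\int_{\Rn}\Phi(\chi_B(x)/\lambda)\,dx = |B|\,\Phi(1/\lambda)$. Hence the defining condition $\int_{\Rn}\Phi(\chi_B(x)/\lambda)\,dx\le 1$ is equivalent to $\Phi(1/\lambda)\le |B|^{-1}$, and therefore
\[
\|\chi_B\|_{L^{\Phi}} = \inf\{\lambda>0 : \Phi(1/\lambda)\le |B|^{-1}\}.
\]

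The key step is to identify this infimum with $1/\Phi^{-1}(|B|^{-1})$. Writing $t=1/\lambda$, the set of admissible $\lambda$ corresponds to $\{t>0:\Phi(t)\le |B|^{-1}\}$, and since $\Phi$ is nondecreasing this is an interval of the form $(0,t^{\ast}]$ with $t^{\ast}=\sup\{t:\Phi(t)\le|B|^{-1}\}$; left-continuity of $\Phi$ guarantees $\Phi(t^{\ast})\le|B|^{-1}$, so the supremum is attained and $t^{\ast}$ is exactly the value $\Phi^{-1}(|B|^{-1})=\inf\{r\ge0:\Phi(r)>|B|^{-1}\}$ furnished by the generalized inverse. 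Taking reciprocals yields $\|\chi_B\|_{L^{\Phi}} = 1/\Phi^{-1}(|B|^{-1})$. This endpoint bookkeeping --- making sure the supremum defining $t^{\ast}$ coincides with the infimum defining $\Phi^{-1}$, and that left-continuity places $t^{\ast}$ on the correct side of the inequality --- is the only delicate point of the argument.

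For the weak norm I would analyze the level sets of $\chi_B$. Since $|\chi_B(x)|>\lambda$ holds precisely when $x\in B$ and $\lambda<1$, the truncation $\lambda\,\chi_{_{(\lambda,\infty)}}(|\chi_B|)$ equals $\lambda\,\chi_B$ when $0<\lambda<1$ and vanishes identically when $\lambda\ge1$. Using positive homogeneity of the Luxemburg norm together with the value just computed, I get $\|\lambda\,\chi_{_{(\lambda,\infty)}}(|\chi_B|)\|_{L^{\Phi}} = \lambda/\Phi^{-1}(|B|^{-1})$ for $0<\lambda<1$, so the supremum over $\lambda>0$ is approached as $\lambda\to1^{-}$ and equals $1/\Phi^{-1}(|B|^{-1})$. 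This gives $\|\chi_B\|_{WL^{\Phi}} = 1/\Phi^{-1}(|B|^{-1})$ as well, closing the chain of equalities and finishing the proof.
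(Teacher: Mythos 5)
Your proof is correct: the reduction of the modular to $|B|\,\Phi(1/\lambda)$, the identification of the resulting infimum with $1/\Phi^{-1}(|B|^{-1})$ via the generalized inverse, and the observation that the weak-norm supremum over $0<\lambda<1$ reproduces the Luxemburg norm are all sound. The paper offers no written proof, stating only that the lemma follows ``by elementary calculations,'' and your argument is precisely that elementary verification, with the endpoint bookkeeping for the generalized inverse handled correctly.
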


We have the following scaling law :
\begin{lemma}\label{Vak03} \cite[Lemma 10]{DGNSShiPos2019} Let $\beta > 0$, $\Phi$ be a Young function and $\Phi_{\beta}(t) = \Phi(t^{1/\beta})$. Then
	\begin{align*}
		\big\| |f|^{\beta} \big\|_{L^{\Phi_{\beta}}} = \| f \|_{L^{\Phi}}^{\beta} ~~ \mbox{and} ~~
		\big\| |f|^{\beta} \big\|_{WL^{\Phi_{\beta}}} = \| f \|_{WL^{\Phi}}^{\beta}
	\end{align*}	
	for all measurable functions $f$.
\end{lemma}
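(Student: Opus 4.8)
The plan is to derive both identities directly from the definitions of the Luxemburg--Nakano and weak quasi-norms via the single change of variable $\lambda = \mu^{\beta}$, reducing everything to the defining modular of $\Phi$. Throughout I only use that $s \mapsto s^{\beta}$ is a continuous increasing bijection of $(0,\infty)$ onto itself, so that it commutes with both infima and suprema taken over subsets of $(0,\infty)$; in particular no convexity of $\Phi_{\beta}$ is needed, only the homogeneity of the Luxemburg functional, which itself follows at once from the infimum definition.

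For the strong norm I would start from
\[
\big\| |f|^{\beta} \big\|_{L^{\Phi_{\beta}}} = \inf\Big\{\lambda>0 : \int_{\Rn} \Phi_{\beta}\Big(\frac{|f(x)|^{\beta}}{\lambda}\Big)\,dx \le 1\Big\}.
\]
Writing $\lambda = \mu^{\beta}$ with $\mu>0$ and using $\Phi_{\beta}(t) = \Phi(t^{1/\beta})$ gives $\Phi_{\beta}\big(|f(x)|^{\beta}/\mu^{\beta}\big) = \Phi\big(|f(x)|/\mu\big)$, so the modular condition becomes $\int_{\Rn}\Phi(|f(x)|/\mu)\,dx \le 1$. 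Since $\lambda>0 \Leftrightarrow \mu>0$ and $\lambda = \mu^{\beta}$, taking the infimum and pulling the $\beta$-th power outside yields exactly $\|f\|_{L^{\Phi}}^{\beta}$.

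For the weak norm the key observation is that for any measurable set $E$ one has $\chi_{E}^{\beta} = \chi_{E}$, which lets me feed the indicators appearing in the definition back into the strong-norm identity. Indeed, $\chi_{(\lambda,\infty)}(|f|^{\beta}) = \chi_{(\lambda^{1/\beta},\infty)}(|f|)$, and applying the strong-norm identity to $w = \chi_{(\lambda^{1/\beta},\infty)}(|f|)$ (so that $|w|^{\beta} = w$) gives $\|w\|_{L^{\Phi_{\beta}}} = \|w\|_{L^{\Phi}}^{\beta}$. Combining this with homogeneity of the norm,
\[
\big\| \lambda\, \chi_{(\lambda,\infty)}(|f|^{\beta})\big\|_{L^{\Phi_{\beta}}} = \lambda\, \|w\|_{L^{\Phi}}^{\beta} = \Big(\lambda^{1/\beta}\,\|\chi_{(\lambda^{1/\beta},\infty)}(|f|)\|_{L^{\Phi}}\Big)^{\beta}.
\]
Substituting $\nu = \lambda^{1/\beta}$ and taking the supremum over $\lambda>0$ (equivalently over $\nu>0$), the $\beta$-th power again factors out of the supremum, producing $\|f\|_{WL^{\Phi}}^{\beta}$.

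The computations themselves are routine changes of variable; the only points requiring care are the commutation of the power map with $\inf$ and $\sup$ (immediate, as it is a monotone bijection of the positive half-line) and the reduction of the weak case to the strong case, which is the real content. The trick $\chi_{E}^{\beta} = \chi_{E}$ is what makes this reduction possible without invoking any relation between $\Phi_{\beta}^{-1}$ and $\Phi^{-1}$; alternatively one could route the weak-norm computation through Lemma~\ref{charorlc} together with the identity $\Phi_{\beta}^{-1}(s) = (\Phi^{-1}(s))^{\beta}$, but that would require first checking that $\Phi_{\beta}$ is itself a Young function, which the present argument avoids.
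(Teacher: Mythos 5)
Your proof is correct. There is nothing in the paper to compare it against: the lemma is quoted with a citation to \cite[Lemma 10]{DGNSShiPos2019} and no proof is given in this paper, so your argument serves as a self-contained verification. The substitution $\lambda=\mu^{\beta}$ in the Luxemburg functional is the standard argument for the strong-norm identity, and your reduction of the weak case to the strong case via $\chi_{E}^{\beta}=\chi_{E}$, combined with $\chi_{(\lambda,\infty)}(|f|^{\beta})=\chi_{(\lambda^{1/\beta},\infty)}(|f|)$ and homogeneity, matches exactly how the paper defines $\|\cdot\|_{WL^{\Phi}}$ as a supremum of Luxemburg norms of truncated indicators. Your remark that no convexity of $\Phi_{\beta}$ is needed is also the right reading of the statement: for $\beta>1$ the function $\Phi_{\beta}$ need not be a Young function at all, and the lemma never claims it is, so interpreting $\|\cdot\|_{L^{\Phi_{\beta}}}$ purely through the infimum formula is precisely what makes the statement (and your proof) well posed.
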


The following results are
generalized H\"older's inequalities for the Orlicz and weak Orlicz spaces.
\begin{lemma}\label{oneilhold} \cite{ONeil1965}
	Let $\Phi_{1},\ \Phi_{2},\ \Phi_{3}$ be Young functions. If for all $t\geq 0$ we have
	\begin{center}
		$\Phi_{1}^{-1}(t)\Phi_{2}^{-1}(t)\leq\Phi_{3}^{-1}(t)$,
	\end{center}
	then
	\begin{center}
		$\Vert fg \Vert_{L^{\Phi_{3}}}\lesssim\Vert f\Vert_{L^{\Phi_{1}}}\Vert g\Vert_{L^{\Phi_{2}}}$
	\end{center}
	for every $f\in L^{\Phi_{1}}(\Rn)$ and $g \in L^{\Phi_{2}}(\Rn)$.
\end{lemma}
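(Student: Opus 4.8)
The plan is to deduce the norm inequality from a single pointwise estimate connecting the three Young functions, and then exploit the definition of the Luxemburg norm together with convexity.

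First I would establish the pointwise inequality
$$\Phi_3(ab) \le \Phi_1(a) + \Phi_2(b) \qquad \text{for all } a,b \ge 0.$$
To obtain this, fix $a,b\ge 0$ and set $t = \max\{\Phi_1(a),\Phi_2(b)\}$. Each $\Phi_i^{-1}$ is nondecreasing, and from the definition $\Phi_i^{-1}(s)=\inf\{r\ge 0:\Phi_i(r)>s\}$ together with the monotonicity of $\Phi_i$ one has $\Phi_i^{-1}(\Phi_i(s))\ge s$. Since $t\ge\Phi_1(a)$, this gives $\Phi_1^{-1}(t)\ge \Phi_1^{-1}(\Phi_1(a))\ge a$, and likewise $\Phi_2^{-1}(t)\ge b$. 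Hence $ab \le \Phi_1^{-1}(t)\,\Phi_2^{-1}(t)$, and the hypothesis $\Phi_1^{-1}(t)\Phi_2^{-1}(t)\le\Phi_3^{-1}(t)$ yields $ab\le\Phi_3^{-1}(t)$. Applying the nondecreasing function $\Phi_3$ and using $\Phi_3(\Phi_3^{-1}(t))\le t$ (a consequence of the left-continuity of $\Phi_3$) produces $\Phi_3(ab)\le t = \max\{\Phi_1(a),\Phi_2(b)\}\le\Phi_1(a)+\Phi_2(b)$.

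Next I would normalize. We may assume $A:=\|f\|_{L^{\Phi_1}}$ and $B:=\|g\|_{L^{\Phi_2}}$ are both positive and finite, the degenerate cases being trivial. The definition of the Luxemburg norm gives $\int_{\Rn}\Phi_1(|f|/A)\,dx\le 1$ and $\int_{\Rn}\Phi_2(|g|/B)\,dx\le 1$. Applying the pointwise inequality with $a=|f(x)|/A$ and $b=|g(x)|/B$ and integrating yields
$$\int_{\Rn}\Phi_3\Big(\frac{|f(x)g(x)|}{AB}\Big)\,dx \le \int_{\Rn}\Phi_1\Big(\frac{|f|}{A}\Big)\,dx + \int_{\Rn}\Phi_2\Big(\frac{|g|}{B}\Big)\,dx \le 2.$$

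Finally I would absorb the factor $2$ using convexity: since $\Phi_3(0)=0$, convexity gives $\Phi_3(\tfrac{1}{2}r)\le\tfrac{1}{2}\Phi_3(r)$, so replacing $AB$ by $2AB$ in the last display bounds the integral by $1$. By the definition of the Luxemburg norm this means $\|fg\|_{L^{\Phi_3}}\le 2AB = 2\,\|f\|_{L^{\Phi_1}}\|g\|_{L^{\Phi_2}}$, which is the claimed inequality with constant $C=2$. The one place requiring care—and the main obstacle—is the first step: the relations $\Phi_i^{-1}(\Phi_i(s))\ge s$ and $\Phi_i(\Phi_i^{-1}(t))\le t$ must be handled through the generalized inverse rather than assuming a genuine two-sided inverse, since the $\Phi_i$ need not belong to $\mathcal{Y}$.
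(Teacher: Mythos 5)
Your proof is correct. Note that the paper does not prove this lemma at all --- it is quoted as a known result with a citation to O'Neil's 1965 paper --- so there is no internal proof to compare against; what you have written is essentially the classical argument behind the cited result. The two key reductions are handled properly: the pointwise estimate $\Phi_3(ab)\le\Phi_1(a)+\Phi_2(b)$ via $t=\max\{\Phi_1(a),\Phi_2(b)\}$, and the normalization--convexity step that converts it into the norm inequality with explicit constant $2$. In particular, you correctly identify the one delicate point, namely that with the generalized inverse $\Phi^{-1}(s)=\inf\{r\ge 0:\Phi(r)>s\}$ one only has the one-sided relations $\Phi^{-1}(\Phi(s))\ge s$ (from monotonicity) and $\Phi(\Phi^{-1}(t))\le t$ (from left-continuity), and these one-sided inequalities are exactly what the argument needs; this is why the lemma holds for arbitrary Young functions, not only those in $\mathcal{Y}$ where $\Phi^{-1}$ is a genuine inverse. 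One small point you assert without comment: the normalization $\int_{\Rn}\Phi_1\bigl(|f|/A\bigr)\,dx\le 1$ with $A=\|f\|_{L^{\Phi_1}}$, i.e.\ that the infimum defining the Luxemburg norm is attained, itself rests on left-continuity of $\Phi_1$ together with monotone convergence; it is standard, but since you were careful to flag the analogous subtlety for the inverses, it deserves the same one-line justification. Also, in the pointwise step you should dispose explicitly of the case $\max\{\Phi_1(a),\Phi_2(b)\}=\infty$, where the inequality is trivial since the right-hand side is infinite; your argument as written implicitly assumes $t$ finite when invoking the hypothesis $\Phi_1^{-1}(t)\Phi_2^{-1}(t)\le\Phi_3^{-1}(t)$. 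Neither remark is a gap in substance.
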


\begin{lemma}\label{oneilholdw} \cite{KaNa18}
	Let $\Phi_{1},\ \Phi_{2},\ \Phi_{3}$ be Young functions. If for all $t\geq 0$ we have
	\begin{center}
		$\Phi_{1}^{-1}(t)\Phi_{2}^{-1}(t)\leq\Phi_{3}^{-1}(t)$,
	\end{center}
	then
	\begin{center}
		$\Vert fg \Vert_{WL^{\Phi_{3}}}\lesssim\Vert f\Vert_{WL^{\Phi_{1}}}\Vert g\Vert_{WL^{\Phi_{2}}}$
	\end{center}
	for every $f\in WL^{\Phi_{1}}(\Rn)$ and $g \in WL^{\Phi_{2}}(\Rn)$.
\end{lemma}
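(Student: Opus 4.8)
The plan is to reduce everything to a distributional inequality via the indicator‑function characterization of the weak Orlicz norm. First I would combine the definition of $\|\cdot\|_{WL^{\Phi}}$ with Lemma \ref{charorlc} to record the identity
$$\|f\|_{WL^{\Phi}} = \sup_{\lambda>0}\frac{\lambda}{\Phi^{-1}\big(|\{x:|f(x)|>\lambda\}|^{-1}\big)},$$
valid for any Young function $\Phi$, since $\lambda\,\chi_{(\lambda,\infty)}(|f|)=\lambda\,\chi_{E_{\lambda}}$ with $E_{\lambda}=\{|f|>\lambda\}$ and $\|\chi_{E_{\lambda}}\|_{L^{\Phi}}=1/\Phi^{-1}(|E_{\lambda}|^{-1})$. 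Writing $F=\|f\|_{WL^{\Phi_1}}$ and $G=\|g\|_{WL^{\Phi_2}}$, this identity turns finiteness of the weak norms into the pointwise distributional bounds
$$|\{|f|>s\}|\le \frac{1}{\Phi_1(s/F)},\qquad |\{|g|>t\}|\le\frac{1}{\Phi_2(t/G)}\qquad (s,t>0),$$
which I would derive from $\Phi_1^{-1}(|\{|f|>s\}|^{-1})\ge s/F$ using that $\Phi_1^{-1}(a)\ge s/F$ forces $\Phi_1(s/F)\le a$ by monotonicity and left‑continuity of $\Phi_1$.

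Second, I would exploit the elementary inclusion: if $|f(x)|\le s$ and $|g(x)|\le\lambda/s$ then $|f(x)g(x)|\le\lambda$, so for every $s>0$
$$\{|fg|>\lambda\}\subseteq\{|f|>s\}\cup\{|g|>\lambda/s\},$$
whence $m:=|\{|fg|>\lambda\}|\le \Phi_1(s/F)^{-1}+\Phi_2(\lambda/(sG))^{-1}$. The heart of the argument is to choose the splitting parameter $s$ so as to balance the two terms: picking $s$ with $\Phi_1(s/F)=\Phi_2(\lambda/(sG))=:V$ gives $m\le 2/V$, while these defining equalities force $s/F\le\Phi_1^{-1}(V)$ and $\lambda/(sG)\le\Phi_2^{-1}(V)$. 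Multiplying the last two inequalities and invoking the hypothesis $\Phi_1^{-1}(V)\,\Phi_2^{-1}(V)\le\Phi_3^{-1}(V)$ yields $\lambda\le FG\,\Phi_3^{-1}(V)$.

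Finally I would convert this into a bound on the $WL^{\Phi_3}$‑norm. Since $V\le 2/m$ and $\Phi_3^{-1}$ is increasing with $\Phi_3^{-1}(2u)\le 2\,\Phi_3^{-1}(u)$ — a consequence of the convexity estimate $\Phi_3(2r)\ge 2\,\Phi_3(r)$ — I obtain $\Phi_3^{-1}(V)\le\Phi_3^{-1}(2/m)\le 2\,\Phi_3^{-1}(1/m)$, and hence $\lambda\le 2FG\,\Phi_3^{-1}(|\{|fg|>\lambda\}|^{-1})$. Taking the supremum over $\lambda>0$ and using the characterization once more gives $\|fg\|_{WL^{\Phi_3}}\le 2\,\|f\|_{WL^{\Phi_1}}\|g\|_{WL^{\Phi_2}}$, which is the claimed inequality.

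I expect the main obstacle to be the careful handling of the generalized inverses. Young functions need not be strictly increasing nor finite everywhere, so identities such as $\Phi_i(\Phi_i^{-1}(t))=t$ hold only as one‑sided inequalities, and the balancing value $s$ with $\Phi_1(s/F)=\Phi_2(\lambda/(sG))$ may fail to exist exactly when the $\Phi_i$ have jumps or become infinite. Both issues are resolved by working throughout with the monotonicity and left‑continuity of the $\Phi_i$ together with relation \eqref{2.3}, and, where exact equality is unavailable, replacing the balance by a near‑optimal choice of $s$ and passing to the limit; the degenerate cases $m\in\{0,\infty\}$ are treated directly.
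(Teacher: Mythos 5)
The paper offers no proof of this lemma at all --- it is imported verbatim from Kawasumi--Nakai \cite{KaNa18} --- so there is no internal argument to compare against; your proposal is a genuine self-contained proof, and its skeleton is sound. The individual steps check out: the identity $\|f\|_{WL^{\Phi}}=\sup_{\lambda>0}\lambda/\Phi^{-1}\big(|\{|f|>\lambda\}|^{-1}\big)$ does follow from the definition and Lemma \ref{charorlc} (with the natural convention $1/\Phi^{-1}(0)$ when a level set has infinite measure); the implication ``$\Phi^{-1}(a)\ge c$ forces $\Phi(c)\le a$'' is exactly what monotonicity plus left-continuity give for the generalized inverse $\Phi^{-1}(a)=\inf\{r:\Phi(r)>a\}$; the inclusion $\{|fg|>\lambda\}\subseteq\{|f|>s\}\cup\{|g|>\lambda/s\}$ is correct; and $\Phi_3^{-1}(2u)\le 2\,\Phi_3^{-1}(u)$ is indeed a consequence of the convexity bound $\Phi_3(2r)\ge 2\Phi_3(r)$.

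The one genuine weak point is the one you flag yourself: an $s$ with $\Phi_1(s/F)=\Phi_2(\lambda/(sG))$ need not exist, and your argument as written leans on that exact balance. The repair is concrete and stays inside your framework, so let me record it. With $m=|\{|fg|>\lambda\}|\in(0,\infty)$, set $h_1(s)=\Phi_1(s/F)$ (nondecreasing), $h_2(s)=\Phi_2(\lambda/(sG))$ (nonincreasing), and $s_*=\sup\{s>0:h_1(s)\le h_2(s)\}$, which lies in $(0,\infty)$ since $h_1\to0$, $h_2\to\infty$ as $s\to0^+$ and $h_1\to\infty$, $h_2\to0$ as $s\to\infty$. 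For $s<s_*$ one has $h_1(s)\le h_2(s)$, hence $m\le 1/h_1(s)+1/h_2(s)\le 2/h_1(s)$, i.e. $\Phi_1(s/F)\le 2/m$ and so $s/F\le\Phi_1^{-1}(2/m)$; for $s>s_*$ one gets symmetrically $\lambda/(sG)\le\Phi_2^{-1}(2/m)$. Letting $s\uparrow s_*$ in the first bound and $s\downarrow s_*$ in the second, and multiplying, $\lambda/(FG)\le\Phi_1^{-1}(2/m)\,\Phi_2^{-1}(2/m)\le\Phi_3^{-1}(2/m)\le2\,\Phi_3^{-1}(1/m)$. This eliminates the exact balance (and the auxiliary level $V$) entirely, handles jumps and infinite values of the $\Phi_i$ automatically, and yields the quantitative conclusion $\|fg\|_{WL^{\Phi_3}}\le 2\,\|f\|_{WL^{\Phi_1}}\|g\|_{WL^{\Phi_2}}$, which is sharper than the stated $\lesssim$. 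With this substitution (and the trivial disposal of the cases $m=0$, $m=\infty$, $F=0$, $G=0$), your proof is complete and correct.
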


\section{Commutators of Riesz potential and fractional maximal commutators in Orlicz spaces}

$~~~$ In this section we find necessary and sufficient conditions for the boundedness of
the operators $M_{b,\a}$ and $|b,I_{\a}|$ in Orlicz spaces.

The following results completely characterize the boundedness of $M_{\a}$ and $I_{\a}$ on Orlicz spaces.
\begin{theorem}\label{AdamsFrMaxCharOrl}\cite{GulDerHasAAMP}
	Let $0< \a<n$ and $\Phi, \Psi$ be Young functions. The condition
	\begin{equation}\label{adRieszCharOrl2}
		r^{-\frac{\alpha}{n}} \, \Phi^{-1}(r) \le C \Psi^{-1}(r)
	\end{equation}
	for all $r>0$, where $C>0$ does not depend on $r$, is necessary and sufficient for the boundedness of $M_{\a}$ from $L^{\Phi}(\Rn)$ to $WL^{\Psi}(\Rn)$. Moreover, if $\Phi\in\nabla_2$, the condition  \eqref{adRieszCharOrl2} is necessary and sufficient for the boundedness of $M_{\a}$ from $L^{\Phi}(\Rn)$ to $L^{\Psi}(\Rn)$.
\end{theorem}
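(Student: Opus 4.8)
The plan is to prove the two directions separately, treating within sufficiency the weak-type bound first and then bootstrapping to the strong-type bound under the hypothesis $\Phi\in\nabla_2$.

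\smallskip
\noindent\textbf{Necessity.} I would test the operator on characteristic functions of balls. Fixing $B=B(0,\rho)$ and noting that every $x\in B$ lies in $B(x,2\rho)\supseteq B$, one obtains the pointwise lower bound $M_{\alpha}\chi_B(x)\ge 2^{\alpha-n}|B|^{\alpha/n}$ on $B$, so that $B\subseteq\{M_{\alpha}\chi_B>\lambda\}$ for every $\lambda<2^{\alpha-n}|B|^{\alpha/n}$. Using Lemma \ref{charorlc} to evaluate $\|\chi_B\|_{L^{\Phi}}=1/\Phi^{-1}(|B|^{-1})$ and to bound $\|M_{\alpha}\chi_B\|_{WL^{\Psi}}$ from below by $\lambda/\Psi^{-1}(|B|^{-1})$, the assumed boundedness yields $2^{\alpha-n}|B|^{\alpha/n}\Phi^{-1}(|B|^{-1})\le\|M_{\alpha}\|\,\Psi^{-1}(|B|^{-1})$; writing $r=|B|^{-1}$ and letting $\rho$ range over $(0,\infty)$ gives exactly \eqref{adRieszCharOrl2}. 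Since $\|\cdot\|_{WL^{\Psi}}\le\|\cdot\|_{L^{\Psi}}$, the same test settles necessity in the strong-type (``moreover'') statement.

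\smallskip
\noindent\textbf{Weak-type sufficiency.} The starting point is the local estimate obtained from the generalized H\"older inequality (Lemma \ref{oneilhold}) together with \eqref{2.3} and Lemma \ref{charorlc}: for every ball $B$,
$$\frac{1}{|B|}\int_B|f|\,dy\le 2\,\Phi^{-1}(|B|^{-1})\,\|f\|_{L^{\Phi}}.$$
Fix $\lambda>0$ and cover the level set $E_{\lambda}=\{M_{\alpha}f>\lambda\}$: each $x\in E_{\lambda}$ admits a ball $B_x$ with $\int_{B_x}|f|>\lambda|B_x|^{1-\alpha/n}$, and the local estimate combined with \eqref{adRieszCharOrl2} forces the radii to be bounded, so a Vitali selection yields disjoint balls $\{B_j\}$ with $E_{\lambda}\subseteq\bigcup_j 5B_j$. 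Writing $S=\sum_j|B_j|=\big|\bigcup_j B_j\big|$, I would use the subadditivity of $t\mapsto t^{1-\alpha/n}$ (valid since $0<1-\alpha/n<1$) to get $S^{1-\alpha/n}\le\sum_j|B_j|^{1-\alpha/n}$, and then chain
$$\lambda S^{1-\alpha/n}<\sum_j\int_{B_j}|f|\le\int_{\bigcup_j B_j}|f|\le 2S\,\Phi^{-1}(S^{-1})\,\|f\|_{L^{\Phi}}.$$
Dividing and applying \eqref{adRieszCharOrl2} with $r=S^{-1}$ gives $\lambda<2C\,\Psi^{-1}(S^{-1})\|f\|_{L^{\Phi}}$, i.e. $S<1/\Psi\big(\lambda/(2C\|f\|_{L^{\Phi}})\big)$; the convexity of $\Psi$ then absorbs the factor $5^n$ coming from $|E_{\lambda}|\le 5^nS$ and produces $\lambda\le 2C\,5^n\,\Psi^{-1}(|E_{\lambda}|^{-1})\|f\|_{L^{\Phi}}$, which is the bound $\|M_{\alpha}f\|_{WL^{\Psi}}\lesssim\|f\|_{L^{\Phi}}$.

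\smallskip
\noindent\textbf{Strong-type sufficiency.} Here I would establish a Hedberg-type pointwise inequality. For a ball $B=B(x,t)$ with $|B|=s$ the average $\tfrac1{|B|}\int_B|f|$ is bounded both by $Mf(x)$ and, via the local estimate, by $2\Phi^{-1}(s^{-1})\|f\|_{L^{\Phi}}\le 2\Phi^{-1}(s^{-1})\|Mf\|_{L^{\Phi}}$ (using $|f|\le Mf$ a.e.). Taking the minimum, optimizing $s^{\alpha/n}\min\{\cdots\}$ over $s$ — the two regimes meeting where $Mf(x)=2\Phi^{-1}(s_0^{-1})\|Mf\|_{L^{\Phi}}$ — and invoking \eqref{adRieszCharOrl2} in each regime leads to
$$M_{\alpha}f(x)\le 2C\,\|Mf\|_{L^{\Phi}}\,\Psi^{-1}\!\Big(\Phi\big(\tfrac{Mf(x)}{2\|Mf\|_{L^{\Phi}}}\big)\Big).$$
Taking the $L^{\Psi}$-norm, the Luxemburg functional together with $\Psi(\Psi^{-1}(\cdot))\le(\cdot)$ reduces matters to $\int_{\Rn}\Phi\big(Mf(x)/(2\|Mf\|_{L^{\Phi}})\big)\,dx\le 1$, which holds by the definition of the Luxemburg norm. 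The last ingredient is the classical fact that $\Phi\in\nabla_2$ renders the Hardy--Littlewood maximal operator bounded on $L^{\Phi}(\Rn)$, so $\|Mf\|_{L^{\Phi}}\lesssim\|f\|_{L^{\Phi}}$ and the estimate closes.

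\smallskip
\noindent\textbf{Main obstacle.} I expect the delicate point to be the strong-type bootstrap. A naive optimization places $\|f\|_{L^{\Phi}}$ inside $\Phi$ with a constant that cannot be re-absorbed without a $\Delta_2$ assumption on $\Phi$, which we do not have. Running the Hedberg estimate with the larger quantity $\|Mf\|_{L^{\Phi}}$ throughout (legitimate since $|f|\le Mf$) is precisely what makes the terminal modular inequality self-normalizing, so that the only external input required is the classical implication $\Phi\in\nabla_2\Rightarrow M:L^{\Phi}\to L^{\Phi}$. By contrast the weak-type argument is routine once one observes that the subadditivity of $t\mapsto t^{1-\alpha/n}$ is exactly what allows the single scalar condition \eqref{adRieszCharOrl2} to control the sum over the selected balls.
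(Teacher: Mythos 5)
The paper offers no proof of Theorem \ref{AdamsFrMaxCharOrl}: it is quoted from \cite{GulDerHasAAMP} (with the accompanying remark that \cite[Theorem 4]{DGNSShiPos2019} removes the extra hypothesis $\Phi\in\mathcal{Y}$), and it enters the paper only as a black box in the proofs of Theorems \ref{RieszPotCharOrl0V}, \ref{CommRieszCharOrl01}, \ref{AdamsCommFrCharOrl} and Corollary \ref{OmD01}. So there is no internal proof to compare yours against; judged on its own, your argument is in substance a correct, self-contained proof along the classical lines of the cited literature: necessity by testing on $\chi_{B}$ via Lemma \ref{charorlc}; weak-type sufficiency via the local estimate $\frac{1}{|B|}\int_B|f|\,dy\le 2\Phi^{-1}(|B|^{-1})\|f\|_{L^{\Phi}}$ and a Vitali selection; strong-type sufficiency via a Hedberg-type pointwise bound plus the classical (external) fact that $\Phi\in\nabla_2$ implies $M$ is bounded on $L^{\Phi}(\Rn)$. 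Your device of running the Hedberg estimate with $\|Mf\|_{L^{\Phi}}$ in place of $\|f\|_{L^{\Phi}}$, so that the terminal modular inequality is exactly $\int_{\Rn}\Phi\bigl(Mf(x)/(2\|Mf\|_{L^{\Phi}})\bigr)dx\le 1$, is precisely what makes the argument close without any $\Delta_2$ assumption.

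Two details in the weak-type step need patching, both routine. (i) The claim that \eqref{adRieszCharOrl2} ``forces the radii to be bounded'' is not literally true: \eqref{adRieszCharOrl2} allows $\Psi^{-1}$ to be bounded (take $\Phi(t)=t^{n/\a}$ and $\Psi$ the Young function with $\Psi=0$ on $[0,1]$ and $\Psi=\infty$ on $(1,\infty)$, so $L^{\Psi}=L^{\infty}$), and then the inequality $\Psi^{-1}(|B_x|^{-1})\ge \lambda/(2C\|f\|_{L^{\Phi}})$ gives no upper bound on $|B_x|$ for small $\lambda$. The repair is immediate: when $\lambda\le 2C\Psi^{-1}(0)\|f\|_{L^{\Phi}}$ the target inequality $\lambda\le 2C\Psi^{-1}(|E_{\lambda}|^{-1})\|f\|_{L^{\Phi}}$ holds trivially because $\Psi^{-1}$ is nondecreasing, and when $\lambda> 2C\Psi^{-1}(0)\|f\|_{L^{\Phi}}$ the radii are indeed bounded below in measure by $\Psi(t)>0$ for any $t$ strictly between $\Psi^{-1}(0)$ and $\lambda/(2C\|f\|_{L^{\Phi}})$. (ii) Your quantity $S=\sum_j|B_j|$ could a priori be infinite; the standard remedy, which also absorbs (i), is to run the chain of inequalities over an arbitrary finite subfamily of the selected balls and take the supremum at the end. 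Finally, it is worth saying explicitly that the degenerate situation in the Hedberg step ($\Phi$ vanishing on an interval, which would make the crossing point $s_0$ ill-defined) cannot occur: \eqref{adRieszCharOrl2} with $r\le 1$ gives $\Phi^{-1}(r)\le C\Psi^{-1}(1)r^{\a/n}\to 0$ as $r\to 0$, so $\Phi$ is positive on $(0,\infty)$. None of this affects the correctness of your overall scheme.
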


\begin{remark}
	Theorem \ref{AdamsFrMaxCharOrl} was proved in \cite{GulDerHasAAMP} under the condition $\Phi\in\mathcal{Y}$. But now we know that the condition $\Phi\in\mathcal{Y}$ is superfluous, see \cite[Theorem 4]{DGNSShiPos2019}.
\end{remark}

\begin{theorem}\label{RieszPotCharOrl0V}
	Let $0< \a<n$ and $\Phi, \Psi$ be Young functions.
	Then the condition \eqref{adRieszCharOrl2} is necessary and sufficient for the boundedness of $I_{\a}$ from $L^{\Phi}(\Rn)$ to $WL^{\Psi}(\Rn)$. Moreover, if $\Phi\in\nabla_2$, the condition  \eqref{adRieszCharOrl2} is necessary and sufficient for the boundedness of $I_{\a}$ from $L^{\Phi}(\Rn)$ to $L^{\Psi}(\Rn)$.
\end{theorem}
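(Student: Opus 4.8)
The plan is to split the equivalence into two directions handled by entirely different tools. The necessity of \eqref{adRieszCharOrl2} will be inherited almost for free from the fractional maximal case, so suppose first that $I_\alpha$ is bounded from $L^{\Phi}(\Rn)$ into $WL^{\Psi}(\Rn)$. For every $f\in L^{\Phi}(\Rn)$ we have $\||f|\|_{L^{\Phi}}=\|f\|_{L^{\Phi}}$, and by \eqref{eq001pws} the pointwise bound $0\le M_{\alpha}f\le v_n^{\alpha/n-1}\,I_{\alpha}(|f|)$ holds. Since the weak Orlicz quasi-norm is monotone on nonnegative functions, this gives $\|M_{\alpha}f\|_{WL^{\Psi}}\lesssim\|I_{\alpha}(|f|)\|_{WL^{\Psi}}\lesssim\|f\|_{L^{\Phi}}$, so $M_{\alpha}$ is bounded from $L^{\Phi}$ into $WL^{\Psi}$, and the necessity part of Theorem \ref{AdamsFrMaxCharOrl} forces \eqref{adRieszCharOrl2}. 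In the strong-type case one first uses $\|\cdot\|_{WL^{\Psi}}\le\|\cdot\|_{L^{\Psi}}$ to reduce to the weak statement, so no hypothesis on $\Phi$ is needed here.

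For sufficiency — the real content — I would dominate $I_{\alpha}$ pointwise by a product of two fractional maximal operators. Fixing $\epsilon\in(0,\min(\alpha,n-\alpha))$, Welland's inequality provides a constant $C=C(\epsilon)$ with $I_{\alpha}(|f|)(x)\le C\,(M_{\alpha-\epsilon}f(x))^{1/2}(M_{\alpha+\epsilon}f(x))^{1/2}$, where both exponents $\alpha\mp\epsilon$ lie in $(0,n)$. I then introduce the intermediate Young functions $\Theta_1,\Theta_2$ determined by
\[
\Theta_1^{-1}(r)=r^{-(\alpha-\epsilon)/n}\Phi^{-1}(r),\qquad \Theta_2^{-1}(r)=r^{-(\alpha+\epsilon)/n}\Phi^{-1}(r),
\]
so that the condition \eqref{adRieszCharOrl2} with $\alpha$ replaced by $\alpha\mp\epsilon$ holds for the pairs $(\Phi,\Theta_1)$ and $(\Phi,\Theta_2)$, whence Theorem \ref{AdamsFrMaxCharOrl} yields the boundedness of $M_{\alpha-\epsilon}\colon L^{\Phi}\to L^{\Theta_1}$ and $M_{\alpha+\epsilon}\colon L^{\Phi}\to L^{\Theta_2}$ (the strong targets requiring $\Phi\in\nabla_2$). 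Setting $\Psi_i(t)=\Theta_i(t^2)$, the scaling law of Lemma \ref{Vak03} gives $\|(M_{\alpha\mp\epsilon}f)^{1/2}\|_{L^{\Psi_i}}=\|M_{\alpha\mp\epsilon}f\|_{L^{\Theta_i}}^{1/2}$, while $\Psi_i^{-1}=(\Theta_i^{-1})^{1/2}$ combined with \eqref{adRieszCharOrl2} produces
\[
\Psi_1^{-1}(t)\,\Psi_2^{-1}(t)=t^{-\alpha/n}\Phi^{-1}(t)\le C\,\Psi^{-1}(t).
\]
This is exactly the hypothesis of the generalized Hölder inequality (Lemma \ref{oneilhold}); applying it to the product in Welland's inequality and then the two maximal bounds yields $\|I_{\alpha}(|f|)\|_{L^{\Psi}}\lesssim\|M_{\alpha-\epsilon}f\|_{L^{\Theta_1}}^{1/2}\|M_{\alpha+\epsilon}f\|_{L^{\Theta_2}}^{1/2}\lesssim\|f\|_{L^{\Phi}}$. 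The weak-type assertion follows verbatim, using Lemma \ref{oneilholdw} in place of Lemma \ref{oneilhold}, the weak half of Lemma \ref{Vak03}, and the weak conclusion of Theorem \ref{AdamsFrMaxCharOrl}, which requires no assumption on $\Phi$.

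The step I expect to demand the most care is the construction of the auxiliary functions: one must verify that $\Theta_1,\Theta_2$ — and hence $\Psi_1,\Psi_2$ — can be chosen to be genuine Young functions whose inverses coincide, up to the equivalence $\approx$, with the prescribed expressions $r^{-(\alpha\mp\epsilon)/n}\Phi^{-1}(r)$. Quasi-concavity of these expressions is what is really needed, and this is where the freedom to take $\epsilon$ small and the quasi-concavity of $\Phi^{-1}$ (together with \eqref{2.3}) enter; one also has to confirm that the constants produced by Lemmas \ref{oneilhold}, \ref{oneilholdw} and \ref{Vak03} are independent of $f$. Finally, since Welland's inequality is a classical pointwise estimate not recorded among the preliminaries, I would state and reference it explicitly before invoking it.
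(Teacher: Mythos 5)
Your proposal is correct and follows essentially the same route as the paper's proof: Welland's inequality, Theorem \ref{AdamsFrMaxCharOrl} applied to auxiliary Young functions with inverses $r^{-(\alpha\mp\epsilon)/n}\Phi^{-1}(r)$, the scaling law of Lemma \ref{Vak03}, and the generalized H\"older inequalities (Lemmas \ref{oneilhold}, \ref{oneilholdw}), with necessity deduced from \eqref{eq001pws} and the necessity part of Theorem \ref{AdamsFrMaxCharOrl}. The only cosmetic difference is that you rescale by taking square roots of the maximal functions, while the paper squares $I_{\alpha}|f|$ and works with $\Psi_2(t)=\Psi(t^{1/2})$ — the same computation rearranged.
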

\begin{proof}
	Suppose that $\varepsilon>0$ satisfies $0<\a-\varepsilon<\a+\varepsilon<n$ and $x \in \Rn$. Then
	\begin{align*}
		I_{\a}(|f|)(x) \le C \, \big(M_{\a-\varepsilon}f(x)\big)^{\frac{1}{2}}
		\, \big(M_{\a+\varepsilon}f(x)\big)^{\frac{1}{2}},
	\end{align*}
	where $C$ depends only on $\varepsilon$, $\a$, $n$.
	This is known as the Welland inequality (see inequality (2.3) in \cite{Welland}). Thus, by the boundedness properties of $M_{\a}$ we can derive boundedness results for $I_{\a}$.
	
	Now define
	$ A^{-1}(r)\approx r^{-\frac{\a-\varepsilon}{n}} \, \Phi^{-1}(r)  $ and  $ B^{-1}(r)\approx r^{-\frac{\a+\varepsilon}{n}} \, \Phi^{-1}(r)  $.
	Then by Theorem \ref{AdamsFrMaxCharOrl} we have $M_{\a-\varepsilon} : L^{\Phi}(\Rn) \to L^{A}(\Rn) $ and $M_{\a+\varepsilon} : L^{\Phi}(\Rn) \to L^{B}(\Rn) $. Note that by using the condition \eqref{adRieszCharOrl2} we get
	$$
	A^{-1}(r) \, B^{-1}(r)\approx \left[r^{-\frac{\alpha}{n}} \, \Phi^{-1}(r)\right]^{2}\lesssim \Psi^{-1}(r)^{2}=\Psi_{2}^{-1}(r),\text{   where  } \Psi_{2}(t) = \Psi(t^{1/2})
	$$
	which allows us to use Lemma \ref{oneilhold}. Now applying Lemma \ref{oneilhold}
	we get that the condition \eqref{adRieszCharOrl2} is sufficient for the boundedness of $I_{\a}$ from $L^{\Phi}(\Rn)$ to $L^{\Psi}(\Rn)$. Indeed, by Lemmas \ref{Vak03} and \ref{oneilhold}
	\begin{align*}
		\|I_{\a} f \|_{L^{\Psi}} & \le \|I_{\a} |f|\|_{L^{\Psi}} =
		\|\big(I_{\a} |f|\big)^2\|_{L^{\Psi_2}}^{\frac{1}{2}}
		\\
		& \lesssim \|M_{\a-\varepsilon}f \, M_{\a+\varepsilon}f\|_{L^{\Psi_2}}^{\frac{1}{2}}
		\\
		& \lesssim \|M_{\a-\varepsilon}f\|_{L^{A}}^{\frac{1}{2}} \,
		\|M_{\a+\varepsilon}f\|_{L^{B}}^{\frac{1}{2}}
		\\
		& \lesssim \|f\|_{L^{\Phi}}^{\frac{1}{2}} \, \|f\|_{L^{\Phi}}^{\frac{1}{2}}
		= \|f\|_{L^{\Phi}}.
	\end{align*}
	
	The proof of the boundedness  of the weak type is similar to the strong case, but now using the weak boundedness of $M_\alpha$ and Lemma \ref{oneilholdw}.
	
	The necessity of the condition \eqref{adRieszCharOrl2} follows from Theorem \ref{AdamsFrMaxCharOrl} with the help of pointwise inequality \eqref{eq001pws}.
\end{proof}
\begin{remark}
	Note that, Theorem \ref{RieszPotCharOrl0V} was proved in \cite[Theorem 3.3]{GulDerHasJIA}, if in addition the condition
	\begin{equation}\label{adRieszCharOrl02V}
		\int_{r}^{\infty} \Phi^{-1}\big(t^{-n}\big) t^{\alpha}\frac{dt}{t} \le C r^{\alpha} \Phi^{-1}\big(r^{-n}\big)
	\end{equation}
	for all $r>0$, where $C>0$ does not depend on $r$, is satisfied. In the proof of Theorem \ref{RieszPotCharOrl0V}  we do not use the condition \eqref{adRieszCharOrl02V}.
\end{remark}

For proving our main results, we need the following estimate.

\begin{lemma}\label{Vak01} \cite[Proposition 6.4.]{shiarainakai} Let $\Phi$ be a Young function with  $\Phi \in  \Delta_2\cap\nabla_2$, then
	\begin{align*}
		\|M f \|_{L^{\Phi}} \lesssim  \|M^{\sharp} f \|_{L^{\Phi}}
	\end{align*}
	for all $f \in L^{\Phi}(\Rn)$.
\end{lemma}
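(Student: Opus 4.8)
The plan is to reduce the norm inequality to a \emph{modular} inequality which in turn follows from the classical Fefferman--Stein good-$\lambda$ inequality; the two structural hypotheses then play complementary roles, $\nabla_2$ supplying the a priori finiteness and $\Delta_2$ supplying both the distributional bookkeeping and the final passage from modular to norm. First I would record the purely geometric good-$\lambda$ inequality: there are constants $C>0$ and $\gamma_0>0$ depending only on $n$ such that, for every $0<\gamma<\gamma_0$ and every $\lambda>0$,
\begin{equation*}
\big|\{x\in\Rn:\, Mf(x)>2\lambda,\ M^{\sharp}f(x)\le \gamma\lambda\}\big|\le C\gamma\,\big|\{x\in\Rn:\, Mf(x)>\lambda\}\big|.
\end{equation*}
This is obtained from a Calder\'on--Zygmund/Whitney decomposition of the open set $\{Mf>\lambda\}$ together with the definition of $M^{\sharp}$, and it involves no reference to $\Phi$, so it can be used as a black box.

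Next I would integrate this estimate against the Lebesgue--Stieltjes measure $d\Phi$. Using the layer-cake representation and the splitting $\{Mf>2\lambda\}\subseteq\{Mf>2\lambda,\ M^{\sharp}f\le\gamma\lambda\}\cup\{M^{\sharp}f>\gamma\lambda\}$, I get
\begin{equation*}
\int_{\Rn}\Phi\big(\tfrac12 Mf\big)\,dx=\int_0^{\infty}\big|\{Mf>2\lambda\}\big|\,d\Phi(\lambda)\le C\gamma\int_{\Rn}\Phi(Mf)\,dx+\int_{\Rn}\Phi\big(\tfrac1\gamma M^{\sharp}f\big)\,dx,
\end{equation*}
where the last term arises from the change of variables $\mu=\gamma\lambda$ in $\int_0^\infty|\{M^{\sharp}f>\gamma\lambda\}|\,d\Phi(\lambda)$. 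Here $\Delta_2$ enters twice: on the left it gives $\int_{\Rn}\Phi(Mf)\le C_{\Delta}\int_{\Rn}\Phi(\tfrac12 Mf)$, and on the right (iterated finitely many times) it gives $\int_{\Rn}\Phi(\tfrac1\gamma M^{\sharp}f)\le C_\gamma\int_{\Rn}\Phi(M^{\sharp}f)$. Choosing $\gamma$ so small that $C\gamma\le\tfrac12 C_{\Delta}^{-1}$ and absorbing the first term on the right into the left yields the modular inequality $\int_{\Rn}\Phi(Mf)\,dx\lesssim\int_{\Rn}\Phi(M^{\sharp}f)\,dx$, provided the left-hand side is finite.

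The absorption is legitimate only when $\int_{\Rn}\Phi(Mf)<\infty$, and this is exactly where $\nabla_2$ is needed: since $f\in L^{\Phi}(\Rn)$ and $\Phi\in\nabla_2$, the Hardy--Littlewood maximal operator is bounded on $L^{\Phi}(\Rn)$, so $Mf\in L^{\Phi}(\Rn)$, and because $\Phi\in\Delta_2$ membership in $L^{\Phi}$ is equivalent to finiteness of the modular, giving $\int_{\Rn}\Phi(Mf)<\infty$. To obtain the norm estimate I would normalize: assuming without loss of generality that $\sigma:=\|M^{\sharp}f\|_{L^{\Phi}}<\infty$ and replacing $f$ by $f/\sigma$ (using $M(cf)=cMf$ and $M^{\sharp}(cf)=cM^{\sharp}f$ for $c>0$), the modular inequality gives $\int_{\Rn}\Phi(Mf/\sigma)\lesssim\int_{\Rn}\Phi(M^{\sharp}f/\sigma)\le 1$, and one more application of $\Delta_2$ converts this into $\|Mf\|_{L^{\Phi}}\lesssim\sigma=\|M^{\sharp}f\|_{L^{\Phi}}$.

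I expect the main obstacle to be precisely the a priori finiteness required for the absorption step. A fully rigorous treatment either truncates $Mf$ (replacing it by $\min(Mf,N)\chi_{B(0,R)}$) and passes to the limit by monotone convergence, thereby keeping every constant independent of $f$, or else invokes the $\nabla_2$-boundedness of $M$ from the outset as above; the good-$\lambda$ inequality and the $\Delta_2$ manipulations are standard once the constants are tracked carefully. (An alternative route, which I would mention but not pursue, is to start from the weighted inequality $\|Mf\|_{L^p(w)}\lesssim\|M^{\sharp}f\|_{L^p(w)}$ valid for all $A_\infty$ weights and transfer it to $L^{\Phi}$ by Rubio de Francia extrapolation, the hypothesis $\Phi\in\Delta_2\cap\nabla_2$ being exactly what places $L^{\Phi}$ in the extrapolation range.)
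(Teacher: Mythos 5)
Your argument is correct, but note that the paper itself does not prove this lemma at all: it is imported verbatim as Proposition 6.4 of the cited work of Shi, Arai and Nakai, so there is no in-paper proof to compare against. What you have written is the standard (and essentially the only known) route to this result: the Fefferman--Stein good-$\lambda$ inequality, integrated against the Stieltjes measure $d\Phi$, with $\Delta_2$ used for the scaling manipulations of the modular and $\nabla_2$ used to guarantee the a priori finiteness $\int_{\Rn}\Phi(Mf)\,dx<\infty$ that legitimizes the absorption. Your division of labor between the two hypotheses is exactly right, and you correctly flagged the absorption step as the delicate point. Two small refinements would make the write-up airtight. First, the good-$\lambda$ inequality itself (via the Whitney decomposition of $\{Mf>\lambda\}$) requires $\{Mf>\lambda\}$ to be a proper open set of finite measure; this is not automatic for a generic Young function, but here it follows from your own setup, since $\Phi\in\Delta_2$ forces $\Phi(t)>0$ for $t>0$, and then Chebyshev's inequality together with $\int_{\Rn}\Phi(Mf)\,dx<\infty$ gives $|\{Mf>\lambda\}|\le \Phi(\lambda)^{-1}\int_{\Rn}\Phi(Mf)\,dx<\infty$. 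Second, the final passage from the normalized modular bound $\int_{\Rn}\Phi(Mf/\sigma)\,dx\le C$ to the norm bound $\|Mf\|_{L^{\Phi}}\le C\sigma$ does not actually need another application of $\Delta_2$: convexity alone gives $\int_{\Rn}\Phi\bigl(Mf/(C\sigma)\bigr)\,dx\le C^{-1}\int_{\Rn}\Phi(Mf/\sigma)\,dx\le 1$ for $C\ge 1$, hence the norm bound. With those two remarks your proof is complete and matches, in spirit, the argument one finds in the source the paper cites.
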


The next lemma has been proven in \cite{Shirai2006} in the case of $[b,I_{\a}]$ but the proof
works without changes also for the operator $|b,I_{\a}|$.
\begin{lemma}\label{Vak02}  \cite[Lemma 4.2]{Shirai2006} Let $0<\a<n$, $1<r<\infty$ and $b \in BMO(\Rn)$. Then for almost all $x \in \Rn$ and for every $f \in C_{0}^{\infty}(\Rn)$  we have pointwise estimates :
	\begin{align*}
		M^{\sharp} \big([b,I_{\a}] f\big)(x) & \lesssim \|b\|_{\ast} \, \big(I_{\a}|f|(x) +
		I_{\a r}(|f|^r)(x)^{1/r}\big),
		\\
		M^{\sharp} \big(|b,I_{\a}| f\big)(x) & \lesssim \|b\|_{\ast} \, \big(I_{\a}|f|(x) +
		I_{\a r}(|f|^r)(x)^{1/r}\big).
	\end{align*}
\end{lemma}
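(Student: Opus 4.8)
The plan is to prove the pointwise sharp-maximal estimate for $[b,I_\a]$ by the classical multiply-and-subtract argument, and then to record the small modification needed for $|b,I_\a|$. Since $M^{\sharp}g(x)\approx\sup_{B\ni x}\inf_{c}\frac1{|B|}\int_B|g(y)-c|\,dy$, it suffices to fix a ball $B=B(x,t)$ and to produce, for a convenient constant $c_B$, the bound $\frac1{|B|}\int_B\big|[b,I_\a]f(y)-c_B\big|\,dy\lesssim\|b\|_\ast\big(I_\a|f|(x)+I_{\a r}(|f|^r)(x)^{1/r}\big)$. I would write $b=(b-b_B)+b_B$ and $f=f_1+f_2$ with $f_1=f\chi_{2B}$, $f_2=f\chi_{\Rn\setminus 2B}$; the identity
\begin{equation*}
[b,I_\a]f(y)=(b(y)-b_B)\,I_\a f(y)-I_\a\big((b-b_B)f_1\big)(y)-I_\a\big((b-b_B)f_2\big)(y),
\end{equation*}
in which the mean $b_B$ cancels, makes $c_B:=I_\a\big((b-b_B)f_2\big)(x)$ the natural choice and splits the estimate into three pieces $E_1,E_2,E_3$.

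For $E_1=\frac1{|B|}\int_B|b(y)-b_B|\,|I_\a f(y)|\,dy$ I would use H\"older with exponents $(r,r')$ together with the John--Nirenberg inequality, which bounds $\big(\frac1{|B|}\int_B|b-b_B|^{r'}\big)^{1/r'}$ by $C\|b\|_\ast$. It remains to control $\big(\frac1{|B|}\int_B|I_\a f|^{r}\big)^{1/r}$. Writing $I_\a f=I_\a f_1+I_\a f_2$, the far part is almost constant on $B$ because $|y-z|\approx|x-z|$ when $y\in B$ and $z\notin 2B$, so $|I_\a f_2(y)|\lesssim I_\a|f|(x)$. The near part rests on the auxiliary inequality
\begin{equation*}
\Big(\frac1{|B|}\int_B|I_\a f_1(y)|^{r}\,dy\Big)^{1/r}\lesssim I_{\a r}(|f|^r)(x)^{1/r},
\end{equation*}
which I would obtain by estimating $I_\a f_1(y)$ with H\"older in $z$ (factoring the kernel and using $\int_{2B}|y-z|^{\a-n}\,dz\lesssim t^\a$), then applying Fubini and $\int_{B}|y-z|^{\a-n}\,dy\lesssim t^\a$ to reach $t^{\a r-n}\int_{2B}|f|^r$, which is $\lesssim I_{\a r}(|f|^r)(x)$ since $|x-z|\le 2t$ on $2B$ (here one uses $\a r<n$). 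The term $E_2=\frac1{|B|}\int_B I_\a(|b-b_B|f_1)(y)\,dy$ is treated identically: Fubini and the inner bound $\int_B|y-z|^{\a-n}\,dy\lesssim t^\a$ reduce it to $t^{\a-n}\int_{2B}|b-b_B|\,|f|$, and H\"older with $(r,r')$ gives $t^{\a r-n}\int_{2B}|f|^r\lesssim I_{\a r}(|f|^r)(x)$.

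The regularity term $E_3=\frac1{|B|}\int_B\big|I_\a((b-b_B)f_2)(y)-I_\a((b-b_B)f_2)(x)\big|\,dy$ is where the careful bookkeeping lives. Using the kernel Lipschitz bound $\big||y-z|^{\a-n}-|x-z|^{\a-n}\big|\lesssim t\,|x-z|^{\a-n-1}$ (valid for $y\in B$, $|x-z|>2t$) and splitting $\Rn\setminus 2B$ into dyadic annuli $2^{k+1}B\setminus 2^kB$ on which $|x-z|\approx 2^kt$, I would arrive at $\sum_{k\ge1} t\,(2^kt)^{\a-n-1}\int_{2^{k+1}B}|b-b_B|\,|f|$. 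H\"older and the logarithmic growth $\big(\frac1{|2^{k+1}B|}\int_{2^{k+1}B}|b-b_B|^{r'}\big)^{1/r'}\lesssim (k+1)\|b\|_\ast$ turn each annular term into $(k+1)\,2^{-k}\,\|b\|_\ast\,(2^kt)^{\a-n/r}\big(\int_{2^{k+1}B}|f|^r\big)^{1/r}$; here the gain $2^{-k}$ produced by the kernel smoothness is exactly what defeats the logarithmic factor $(k+1)$, while each summand is $\lesssim I_{\a r}(|f|^r)(x)^{1/r}$. Summing the geometric series gives $E_3\lesssim\|b\|_\ast\,I_{\a r}(|f|^r)(x)^{1/r}$, and collecting $E_1,E_2,E_3$ proves the bound for $[b,I_\a]$.

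For $|b,I_\a|$ the same scheme applies after replacing the exact cancellation by the triangle inequality $|b(y)-b(z)|\le|b(y)-b_B|+|b(z)-b_B|$: the local contribution again splits into a $(b(y)-b_B)I_\a f_1$ piece and an $I_\a(|b-b_B|f_1)$ piece handled as in $E_1,E_2$. For the tail one should subtract $c_B=\int_{\Rn\setminus 2B}|b_B-b(z)|\,|x-z|^{\a-n}f(z)\,dz$ (with $b_B$ in place of $b(x)$, which avoids the non-integrable quantity $|b(x)-b_B|$); then $|b,I_\a|f_2(y)-c_B$ splits into a kernel-difference part, estimated as in $E_3$, and a part controlled by $\big||b(y)-b(z)|-|b_B-b(z)|\big|\le|b(y)-b_B|$ that yields $\|b\|_\ast\,I_\a|f|(x)$. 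The restriction to $f\in C_0^\infty$ is used only to guarantee absolute convergence of all integrals. I expect the \emph{conceptual crux}, and the only ingredient beyond routine kernel manipulations, to be the auxiliary inequality of the second paragraph that converts the local $L^r$-average of $I_\a f_1$ into the higher-order potential $I_{\a r}(|f|^r)(x)^{1/r}$; it is precisely this step that forces the second term on the right-hand side of the lemma to be an $I_{\a r}$-potential rather than an $I_\a$-potential.
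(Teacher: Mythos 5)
Your argument is correct, but note that the paper itself contains no proof of this lemma: it is quoted from \cite{Shirai2006}, with only the remark that the proof given there for $[b,I_{\a}]$ ``works without changes'' for $|b,I_{\a}|$. What you have written is essentially a reconstruction of that classical argument --- the decomposition $[b,I_\a]f=(b-b_B)I_\a f-I_\a((b-b_B)f_1)-I_\a((b-b_B)f_2)$, H\"older plus John--Nirenberg for the local terms, the kernel-smoothness gain $2^{-k}$ beating the logarithmic growth $(k+1)\|b\|_{\ast}$ on dyadic annuli for the tail, and the auxiliary inequality converting the local $L^r$-average of $I_\a f_1$ into $I_{\a r}(|f|^r)(x)^{1/r}$ --- so there is no methodological divergence to report. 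Two points in your write-up, however, go beyond what the paper (or its one-line remark) records, and both are sound. First, your treatment of $|b,I_{\a}|$ is a genuine, if small, modification rather than a verbatim repetition: since $|b,I_{\a}|$ lacks the algebraic cancellation $b_B\,I_\a f-I_\a(b_B f)=0$, one must substitute the triangle inequalities $|b(y)-b(z)|\le|b(y)-b_B|+|b(z)-b_B|$ and $\big||b(y)-b(z)|-|b_B-b(z)|\big|\le|b(y)-b_B|$, and your choice of comparison constant built from $b_B$ rather than $b(x)$ is exactly what avoids the uncontrollable quantity $|b(x)-b_B|$; this is precisely the detail hidden behind ``without changes.'' Second, you correctly flag that the proof (and the usefulness of the bound) requires $\a r<n$: the step $t^{\a r-n}\int_{2B}|f|^r\lesssim I_{\a r}(|f|^r)(x)$ reverses direction if $\a r\ge n$, so the hypothesis $1<r<\infty$ as reproduced in the statement is too generous. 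This is consistent with how the lemma is actually used in the proof of Theorem \ref{CommRieszCharOrl01}, where $r$ must be chosen so that $\a r<n$ in order for Theorem \ref{RieszPotCharOrl0V} to apply to $I_{\a r}$.
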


The following theorem gives necessary and sufficient conditions for the
boundedness of the operator $|b,I_{\a}|$ from $L^{\Phi}(\Rn)$ to $L^{\Psi}(\Rn)$.
\begin{theorem} \label{CommRieszCharOrl01}
	Let $0 < \a<n$, $b\in BMO(\Rn)$ and $\Phi, \Psi$ be Young functions such that $\Phi \in  \Delta_2\cap\nabla_2$ and $\Psi \in  \Delta_2\cap\nabla_2$.
	Then the condition \eqref{adRieszCharOrl2} is necessary and sufficient for the boundedness of $|b,I_{\a}|$ from $L^{\Phi}(\Rn)$ to $L^{\Psi}(\Rn)$.
\end{theorem}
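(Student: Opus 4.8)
The plan is to treat the two implications separately, with the sufficiency being the substantial direction. For sufficiency, I would start from the hypothesis $\Psi\in\Delta_2\cap\nabla_2$, which is exactly what is needed to invoke Lemma \ref{Vak01}. Since $|b,I_\alpha|f\ge 0$ and $g\le Mg$ almost everywhere, one has $\||b,I_\alpha|f\|_{L^{\Psi}}\le\|M(|b,I_\alpha|f)\|_{L^{\Psi}}\lesssim\|M^{\sharp}(|b,I_\alpha|f)\|_{L^{\Psi}}$. Lemma \ref{Vak02}, applied with a fixed $r>1$ to be chosen below, then reduces the whole problem to the two estimates $\|I_{\alpha}|f|\|_{L^{\Psi}}\lesssim\|f\|_{L^{\Phi}}$ and $\big\|I_{\alpha r}(|f|^r)^{1/r}\big\|_{L^{\Psi}}\lesssim\|f\|_{L^{\Phi}}$, the factor $\|b\|_{\ast}$ being absorbed into the implicit constant. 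Because Lemma \ref{Vak02} is stated for $f\in C_0^{\infty}(\Rn)$, I would establish the bound first on this dense class and then pass to all of $L^{\Phi}(\Rn)$ using the density of $C_0^{\infty}(\Rn)$, which is guaranteed by $\Phi\in\Delta_2$ (this also takes care of the a priori integrability needed to run the sharp-function inequality).

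The first term is immediate: since $\Phi\in\nabla_2$ and \eqref{adRieszCharOrl2} holds, Theorem \ref{RieszPotCharOrl0V} gives $\|I_{\alpha}|f|\|_{L^{\Psi}}\lesssim\|f\|_{L^{\Phi}}$. The second term is the crux, and I would dispatch it with the scaling law of Lemma \ref{Vak03}. Writing $\Phi_r(t)=\Phi(t^{1/r})$ and $\Psi_r(t)=\Psi(t^{1/r})$, Lemma \ref{Vak03} yields $\big\|I_{\alpha r}(|f|^r)^{1/r}\big\|_{L^{\Psi}}=\big\|I_{\alpha r}(|f|^r)\big\|_{L^{\Psi_r}}^{1/r}$ and $\big\||f|^r\big\|_{L^{\Phi_r}}=\|f\|_{L^{\Phi}}^{r}$, so it suffices to prove $I_{\alpha r}:L^{\Phi_r}(\Rn)\to L^{\Psi_r}(\Rn)$. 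Invoking Theorem \ref{RieszPotCharOrl0V} once more, now at the exponent $\alpha r$, I must check three things: that $0<\alpha r<n$; that the associated condition $t^{-\alpha r/n}\Phi_r^{-1}(t)\le C\Psi_r^{-1}(t)$ holds; and that $\Phi_r\in\nabla_2$. Using $\Phi_r^{-1}(t)=(\Phi^{-1}(t))^{r}$ and $\Psi_r^{-1}(t)=(\Psi^{-1}(t))^{r}$, the middle condition is exactly the $r$-th power of \eqref{adRieszCharOrl2} and so holds automatically. The first and third requirements hold once $r$ is taken close enough to $1$: then $\alpha r<n$ is clear, while $\Phi_r\in\nabla_2$ persists because the $\nabla_2$ condition on $\Phi$ is quantitative and survives the mild reparametrisation $t\mapsto t^{1/r}$ for $r$ near $1$. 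I expect the main obstacle to be precisely this bookkeeping: arranging a single admissible $r$ for which all three requirements are simultaneously met, i.e. verifying that both $\nabla_2$ and \eqref{adRieszCharOrl2} are stable under the scaling $\Phi\mapsto\Phi_r$.

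For necessity, I would test the boundedness on characteristic functions of balls, as in the necessity arguments behind Theorems \ref{AdamsFrMaxCharOrl} and \ref{RieszPotCharOrl0V}. For a ball $B=B(x_0,t)$ and $x\in B$ one has $|b,I_\alpha|\chi_B(x)\ge(2t)^{\alpha-n}\int_B|b(x)-b(y)|\,dy\gtrsim|B|^{\alpha/n}\,|b(x)-b_B|$, whence $\big\||b,I_\alpha|\chi_B\big\|_{L^{\Psi}}\gtrsim|B|^{\alpha/n}\,\big\|(b-b_B)\chi_B\big\|_{L^{\Psi}}$, while $\|\chi_B\|_{L^{\Phi}}=1/\Phi^{-1}(|B|^{-1})$ by Lemma \ref{charorlc}. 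One may equally pass through $M_{b,\alpha}$ via the pointwise inequality \eqref{fgs01}. Using that $b$ is a fixed non-constant $BMO$ function to bound $\big\|(b-b_B)\chi_B\big\|_{L^{\Psi}}$ below by $c\,\|\chi_B\|_{L^{\Psi}}=c/\Psi^{-1}(|B|^{-1})$ on a suitable family of balls, and setting $s=|B|^{-1}$, then reproduces \eqref{adRieszCharOrl2}. The delicate point on this side is securing the oscillation lower bound uniformly across all scales $s$, which is where the structure of $BMO$ has to be exploited with care; this, rather than the testing computation itself, is the genuine obstacle in the necessity direction, whereas the sufficiency is driven entirely by Lemmas \ref{Vak01}--\ref{Vak03} together with Theorem \ref{RieszPotCharOrl0V}.
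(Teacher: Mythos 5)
Your sufficiency argument is essentially the paper's own: the chain $\||b,I_{\a}|f\|_{L^{\Psi}}\le\|M(|b,I_{\a}|f)\|_{L^{\Psi}}\lesssim\|M^{\sharp}(|b,I_{\a}|f)\|_{L^{\Psi}}$ from Lemma \ref{Vak01}, the pointwise bound of Lemma \ref{Vak02}, Theorem \ref{RieszPotCharOrl0V} applied at both $\a$ and $\a r$ (the latter after the scaling law of Lemma \ref{Vak03} and the identity $\Phi_r^{-1}(t)=\Phi^{-1}(t)^{r}$), and density of $C_0^{\infty}(\Rn)$ from $\Phi\in\Delta_2$. On one point you are in fact more careful than the paper: the paper fixes an arbitrary $1<r<\infty$, whereas applying Theorem \ref{RieszPotCharOrl0V} to $I_{\a r}:L^{\Phi_r}(\Rn)\to L^{\Psi_r}(\Rn)$ genuinely requires $\a r<n$ and $\Phi_r\in\nabla_2$, and the latter can fail for large $r$ (if $\Phi(t)=t^{p}$ then $\Phi_r(t)=t^{p/r}\in\nabla_2$ forces $r<p$). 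Your insistence on taking $r$ close to $1$ is the correct repair of this bookkeeping.

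The necessity direction, however, contains a genuine gap, and it is precisely the one you flagged but did not resolve. Your plan requires, for every value of $|B|$ in $(0,\infty)$, a ball $B$ with $\|(b-b_B)\chi_{_B}\|_{L^{\Psi}}\ge c\,\|\chi_{_B}\|_{L^{\Psi}}$ with $c$ independent of scale. For a fixed, arbitrary non-constant $b\in BMO(\Rn)$ no such family exists: if $b$ is a smooth compactly supported bump, then $\frac{1}{|B|}\int_B|b-b_B|\lesssim r(B)$ for small balls and $\lesssim r(B)^{-n}$ for large ones, so the oscillation degenerates at both ends and the testing computation yields no information about $\Phi,\Psi$; indeed for such $b$ the kernel $|b(x)-b(y)|\,|x-y|^{\a-n}$ is strictly less singular than the Riesz kernel, so boundedness for this single $b$ cannot be expected to force \eqref{adRieszCharOrl2}. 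The missing idea is to specialize to an extremal symbol: the paper passes to $M_{b,\a}$ via \eqref{fgs01} (as you suggest one may) and then runs the necessity computation of Theorem \ref{AdamsCommFrCharOrl} with $b(\cdot)=\ln|\cdot|$ and balls centered at the origin, so the condition is shown necessary for boundedness to hold over the class of $BMO$ symbols, not for each fixed one. With that choice your own scheme closes, and more simply than the paper's spherical-coordinate integration: by dilation invariance the mean oscillation of $\ln|\cdot|$ over $B(0,r)$ equals a constant $c_n>0$ for every $r$, hence by generalized H\"older and \eqref{2.3} one gets $\|(b-b_B)\chi_{_B}\|_{L^{\Psi}}\ge\tfrac{c_n}{2}\,|B|\,\widetilde{\Psi}^{-1}(|B|^{-1})\ge\tfrac{c_n}{2}\,\Psi^{-1}(|B|^{-1})^{-1}$, and combining this with your lower bound $\||b,I_{\a}|\chi_{_B}\|_{L^{\Psi}}\gtrsim|B|^{\a/n}\|(b-b_B)\chi_{_B}\|_{L^{\Psi}}$, the assumed boundedness, and Lemma \ref{charorlc} gives \eqref{adRieszCharOrl2} with $t=|B|^{-1}$.
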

\begin{proof}
	\
	
	\textit{Sufficiency:}  Let $1<r<\infty$. Applying Lemmas \ref{Vak01} and \ref{Vak02}, we see that for any $b \in BMO(\Rn)$ and $f \in C_{0}^{\infty}(\Rn)$
	\begin{align}\label{Vak04}
		\||b,I_{\a}| f \|_{L^{\Psi}} & \le  \|M (|b,I_{\a}| f) \|_{L^{\Psi}}  \lesssim  \|M^{\sharp} (|b,I_{\a}| f) \|_{L^{\Psi}}    \notag
		\\
		& \lesssim \|b\|_{\ast} \, \big\|I_{\a}|f| +
		I_{\a r}(|f|^r)^{1/r}\big\|_{L^{\Psi}}
		\\
		& \lesssim \|b\|_{\ast} \, \Big(\big\|I_{\a}|f|\big\|_{L^{\Psi}} +
		\big\|I_{\a r}(|f|^r)\big\|_{L^{\Psi_r}}^{1/r}\Big). \notag
	\end{align}
	By Theorem \ref{RieszPotCharOrl0V},  $I_{\a} : L^{\Phi}(\Rn) \to L^{\Psi}(\Rn)$  is bounded so let us show boundedness of $I_{\a r} : L^{\Phi_{r}}(\Rn) \to L^{\Psi_{r}}(\Rn)$, where $\Phi_{r}(t) = \Phi(t^{1/r})$. ). It is easy to check that $\Psi_{r}^{-1}(t) = \Psi^{-1}(t)^{r}$. Then, it immediately follows that
	\begin{align*}
		\Psi_{r}^{-1}(t) & = \Psi^{-1}(t)^{r} \gtrsim\, t^{-\frac{\alpha r}{n}} \, \Phi^{-1}(t)^{r} =
		t^{-\frac{\alpha r}{n}} \, \Phi_{r}^{-1}(t).
	\end{align*}
	Theorem \ref{RieszPotCharOrl0V} shows that the operator $I_{\a r}$ is bounded from $L^{\Phi_r}(\Rn)$ from $L^{\Psi_r}(\Rn)$  and likewise
	\begin{align*}
		\big\|I_{\a r}(|f|^r)\big\|_{L^{\Psi_r}}^{1/r} \lesssim \big\||f|^r \big\|_{L^{\Phi_r}}^{1/r}.
	\end{align*}
	Coming back to \eqref{Vak04} and definition of Luxemburg-Nakano norm, we have
	\begin{align*}
		\||b,I_{\a}| f \|_{L^{\Psi}} & \lesssim  \|b\|_{\ast} \, \Big(\|f\|_{L^{\Phi}} +
		\big\||f|^r \big\|_{L^{\Phi_{r}}}^{1/r}\Big) \lesssim  \|b\|_{\ast} \, \|f\|_{L^{\Phi}}.
	\end{align*}
	Since $\Phi\in \Delta_2$, we know that $C_{0}^{\infty}(\Rn)$ is dense in $L^{\Phi}(\Rn)$
	\cite[Corollary 3.7.10]{HarjulehtoHasto}. Now we get the result for all $f \in L^{\Phi}(\Rn)$  since the commutator is linear and bounded on a dense set.

	\textit{Necessity:} It follows from Theorem \ref{AdamsCommFrCharOrl} (2) since the boundedness of $|b,I_{\a}|$ implies the boundedness of $M_{b,\a}$ by pointwise inequality \eqref{fgs01}.
	
\end{proof}
\begin{corollary}
	Let $0 < \a<n$, $b\in BMO(\Rn)$ and $\Phi,\Psi$ be Young functions with such that $\Phi \in  \Delta_2\cap\nabla_2$ and $\Psi \in  \Delta_2\cap\nabla_2$. If the condition \eqref{adRieszCharOrl2} holds, then the operator $[b,I_{\a}]$ is bounded from $L^{\Phi}(\Rn)$ to $L^{\Psi}(\Rn)$.
\end{corollary}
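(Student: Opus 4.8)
The plan is to deduce this corollary directly from Theorem~\ref{CommRieszCharOrl01} using the pointwise domination recorded in the introduction, namely
\[
	\big|[b,I_{\a}]f(x)\big| \leq |b,I_{\a}|(|f|)(x)
\]
for almost every $x \in \Rn$. The hypotheses of the corollary---that $0<\a<n$, $b \in BMO(\Rn)$, $\Phi,\Psi \in \Delta_2 \cap \nabla_2$, and condition \eqref{adRieszCharOrl2} holds---are precisely the hypotheses of Theorem~\ref{CommRieszCharOrl01}, which already supplies the boundedness of the auxiliary operator $|b,I_{\a}|$ from $L^{\Phi}(\Rn)$ to $L^{\Psi}(\Rn)$. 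So the only thing left is to transfer this boundedness to $[b,I_{\a}]$ through the pointwise bound.

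The transfer step rests on the monotonicity of the Luxemburg--Nakano norm: since every Young function is increasing (as noted just after Definition~\ref{def2}), whenever $0 \le g \le h$ almost everywhere one has $\|g\|_{L^{\Psi}} \le \|h\|_{L^{\Psi}}$, directly from the definition of the norm as an infimum over admissible $\lambda$. First I would apply this with $g = \big|[b,I_{\a}]f\big|$ and $h = |b,I_{\a}|(|f|)$, which the pointwise inequality above licenses. Then I would invoke Theorem~\ref{CommRieszCharOrl01} to bound $h$, giving
\begin{align*}
	\big\|[b,I_{\a}]f\big\|_{L^{\Psi}}
	= \big\| \, \big|[b,I_{\a}]f\big| \, \big\|_{L^{\Psi}}
	\le \big\| |b,I_{\a}|(|f|) \big\|_{L^{\Psi}}
	\lesssim \|b\|_{\ast} \, \big\| \, |f| \, \big\|_{L^{\Phi}}
	= \|b\|_{\ast} \, \|f\|_{L^{\Phi}}
\end{align*}
for all $f$ in a dense subclass, and hence for all $f \in L^{\Phi}(\Rn)$.

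There is essentially no serious obstacle here; the content of the corollary is entirely carried by Theorem~\ref{CommRieszCharOrl01}, and what remains is the routine observation that a sublinear pointwise majorant controls the Orlicz norm of the commutator. The one point worth stating explicitly is that the norm used is monotone with respect to pointwise domination, which is why passing from $|b,I_{\a}|$ to $[b,I_{\a}]$ costs nothing. As in the sufficiency argument of Theorem~\ref{CommRieszCharOrl01}, one may first establish the estimate for $f \in C_{0}^{\infty}(\Rn)$ and then extend to all of $L^{\Phi}(\Rn)$ by density, which is available because $\Phi \in \Delta_2$; since $[b,I_{\a}]$ is linear, boundedness on the dense set yields boundedness on the whole space.
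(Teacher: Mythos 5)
Your proposal is correct and is exactly the argument the paper intends: the corollary is stated without proof precisely because it follows from Theorem~\ref{CommRieszCharOrl01} combined with the pointwise bound $|[b,I_{\a}]f(x)| \leq |b,I_{\a}|(|f|)(x)$ recorded in the introduction, together with the monotonicity of the Luxemburg--Nakano norm. Your added remarks on density and linearity (needed since $[b,I_{\a}]$ is only formally defined on general $f$) are a sensible and harmless precaution.
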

\begin{remark}
	Note that, Theorem \ref{CommRieszCharOrl01} was proved in \cite[Theorem 5.4]{GulDerHasJIA}, if in addition the condition
	\begin{equation}\label{adRieszCommCharOrl03pot}
		\int_{r}^{\infty} \Big(1+\ln \frac{t}{r}\Big) \Phi^{-1}\big(t^{-n}\big) t^{\alpha} \frac{dt}{t} \le C r^{\alpha} \Phi^{-1}\big(r^{-n}\big)
	\end{equation}
	for all $r>0$, where $C>0$ does not depend on $r$, is satisfied. In the proof of Theorem \ref{CommRieszCharOrl01}  we do not use the condition \eqref{adRieszCommCharOrl03pot}.
\end{remark}

The following theorem gives necessary and sufficient conditions for the
boundedness of the operator $M_{b,\a}$ from $L^{\Phi}(\Rn)$ to $L^{\Psi}(\Rn)$.
\begin{theorem} \label{AdamsCommFrCharOrl}
	Let $0 < \a<n$, $b\in BMO(\Rn)$ and $\Phi, \Psi$ be Young functions such that $\Phi \in  \Delta_2\cap\nabla_2$ and $\Psi \in  \Delta_2\cap\nabla_2$. Then the condition \eqref{adRieszCharOrl2}
	is necessary and sufficient for the boundedness of $M_{b,\a}$ from $L^{\Phi}(\Rn)$ to $L^{\Psi}(\Rn)$.
\end{theorem}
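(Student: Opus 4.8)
The plan is to treat the two directions separately; sufficiency is immediate from results already in hand, while the necessity carries the genuine content. For sufficiency I would combine the pointwise domination \eqref{fgs01}, namely $M_{b,\a}f(x)\le v_n^{\frac{\a}{n}-1}\,|b,I_{\a}|(|f|)(x)$, with the boundedness of $|b,I_{\a}|$ just established in Theorem \ref{CommRieszCharOrl01}. Since $\Phi,\Psi\in\Delta_2\cap\nabla_2$ and \eqref{adRieszCharOrl2} holds, that theorem gives $\||b,I_{\a}|(|f|)\|_{L^{\Psi}}\lesssim\|b\|_{\ast}\|f\|_{L^{\Phi}}$, whence $\|M_{b,\a}f\|_{L^{\Psi}}\lesssim\|b\|_{\ast}\|f\|_{L^{\Phi}}$. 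This invokes only the \emph{sufficiency} half of Theorem \ref{CommRieszCharOrl01}, whose proof rests on Theorem \ref{RieszPotCharOrl0V} and not on the present statement, so there is no circularity with the reduction used there for necessity.

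For necessity I would test $M_{b,\a}$ on characteristic functions of balls. Fix $B=B(x_0,r)$ and take $f=\chi_B$. For $x\in B$ every $y\in B$ satisfies $|x-y|<2r$, so choosing the radius $t=2r$ in the supremum defining $M_{b,\a}$ and using $|B(x,2r)|=2^n|B|$ gives the pointwise lower bound
\[
M_{b,\a}(\chi_B)(x)\ \ge\ |B(x,2r)|^{-1+\frac{\a}{n}}\int_B|b(x)-b(y)|\,dy\ \gtrsim\ |B|^{\frac{\a}{n}}\,|b(x)-b_B|,
\]
where the last step uses $\int_B|b(x)-b(y)|\,dy\ge\big|\int_B(b(x)-b(y))\,dy\big|=|B|\,|b(x)-b_B|$. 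Taking $L^{\Psi}$ norms over $B$, invoking the assumed boundedness, and applying Lemma \ref{charorlc} in the form $\|\chi_B\|_{L^{\Phi}}=1/\Phi^{-1}(|B|^{-1})$ yields
\[
|B|^{\frac{\a}{n}}\,\big\|(b-b_B)\chi_B\big\|_{L^{\Psi}}\ \lesssim\ \big\|M_{b,\a}(\chi_B)\big\|_{L^{\Psi}}\ \lesssim\ \frac{1}{\Phi^{-1}(|B|^{-1})}.
\]

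To conclude I would bound the oscillation norm from below by $\|\chi_B\|_{L^{\Psi}}=1/\Psi^{-1}(|B|^{-1})$; combined with the previous display this gives $|B|^{\frac{\a}{n}}\Phi^{-1}(|B|^{-1})\lesssim\Psi^{-1}(|B|^{-1})$, which is exactly \eqref{adRieszCharOrl2} evaluated at $r=|B|^{-1}$, and letting $|B|$ range over $(0,\infty)$ recovers \eqref{adRieszCharOrl2} for all $r>0$. The main obstacle is precisely the lower bound $\|(b-b_B)\chi_B\|_{L^{\Psi}}\gtrsim\|\chi_B\|_{L^{\Psi}}$: it cannot hold uniformly in $B$ for an arbitrary $b\in BMO(\Rn)$ (a compactly supported $b$ has vanishing relative oscillation on large balls), so the argument must be run with a non-degenerate choice of $b$ adapted to the scale at hand. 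Concretely, for a given ball I would take $b$ to be a translated and dilated logarithmic-type function with $\|b\|_{\ast}\approx 1$ and $|b(x)-b_B|\gtrsim 1$ on a fixed proportion of $B$, forcing $\|(b-b_B)\chi_B\|_{L^{\Psi}}\approx\|\chi_B\|_{L^{\Psi}}$ with constants independent of $B$. Verifying this equivalence of Luxemburg norms, where $\Psi\in\Delta_2\cap\nabla_2$ is used to compare the norm of the truncated oscillation with that of $\chi_B$, is the one step demanding real care.
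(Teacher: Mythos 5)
Your sufficiency argument is exactly the paper's (pointwise domination \eqref{fgs01} plus the sufficiency half of Theorem \ref{CommRieszCharOrl01}, with the non-circularity observation you make being correct), but your necessity argument takes a genuinely different route. The paper also chooses $b=\ln|\cdot|$ and $f=\chi_{B(0,r)}$, but it extracts the lower bound \emph{outside} the ball: it bounds $M_{b,\a}\chi_{B(0,r)}(x)$ from below on an annulus $r<|x|<s$ using $b(x)-b(y)\ge\ln(|x|/r)$, pairs against the test function $g=\widetilde{\Psi}^{-1}(u)\chi_{B(0,s)}$ via the duality between the Luxemburg and Orlicz norms, evaluates the resulting integral in spherical coordinates, and arrives (after taking $v=2$) at $u^{-\a/n}\,\widetilde{\Psi}^{-1}(u)\lesssim\widetilde{\Phi}^{-1}(u)$, which \eqref{2.3} converts into \eqref{adRieszCharOrl2}. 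You instead stay \emph{inside} the ball, using $M_{b,\a}(\chi_B)(x)\gtrsim|B|^{\a/n}|b(x)-b_B|$ for $x\in B$ together with the non-degeneracy of the logarithm's oscillation; this is more elementary (no duality, no explicit integration) and closely parallels the paper's own proof of part 2 of Theorem \ref{CommFrMaxCharOr01}, which runs the same testing argument in the reverse direction.

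The step you flag as delicate does go through, and more cheaply than you expect. For $b=\ln|\cdot|$ and $B=B(0,r)$ one computes $b_B=\ln r-\frac{1}{n}$, so by scaling the set $E=\{x\in B:|b(x)-b_B|\ge c_n\}$ satisfies $|E|\ge|B|/2$ for a purely dimensional constant $c_n$; hence, by Lemma \ref{charorlc} and the concavity bound $\Psi^{-1}(2t)\le 2\Psi^{-1}(t)$ (valid for every Young function), $\|(b-b_B)\chi_B\|_{L^{\Psi}}\ge c_n\|\chi_E\|_{L^{\Psi}}\ge \frac{c_n}{2}\,\Psi^{-1}(|B|^{-1})^{-1}$. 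In particular $\Psi\in\Delta_2\cap\nabla_2$ is not needed at this step, contrary to your closing remark. One logical wrinkle: taking a different (translated/dilated) $b$ for each ball requires the assumed boundedness to hold uniformly over that family; either invoke translation invariance of the operator norm, or simply fix $b=\ln|\cdot|$ and use only balls centered at the origin, which suffices because \eqref{adRieszCharOrl2} depends only on $|B|$. (The paper's own necessity proof has the same interpretive wrinkle, reading the hypothesis as the bound $\|M_{b,\a}f\|_{L^{\Psi}}\lesssim\|b\|_{\ast}\|f\|_{L^{\Phi}}$ applied to this specific $b$.)
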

\begin{proof}
	
	\
	
	\textit{Sufficiency:} Let $0<\a<n$, $b\in BMO(\Rn)$ and $\Phi, \Psi$ be Young functions. Since we have the
	pointwise estimate $M_{b,\a} f(x) \lesssim |b,I_{\a}|(|f|)(x)$, $x \in \Rn$, Theorem \ref{CommRieszCharOrl01} immediately yields
	\begin{align*}
		\|M_{b,\a} f\|_{L^{\Psi}} & \lesssim  \||b,I_{\a}|(|f|)\|_{L^{\Psi}} \lesssim  \|b\|_{\ast} \, \|f\|_{L^{\Phi}}.
	\end{align*}
	
	\textit{Necessity:} Suppose that $b\in BMO(\Rn) \setminus {\{const\}}$ and the fractional maximal commutator $M_{b,\alpha}$ is bounded from $L^{\Phi}(\Rn)$ to $L^{\Psi}(\Rn)$.
	Then
	\begin{align} \label{Vag01}
		\|M_{b,\alpha} f\|_{L^{\Psi}} \lesssim \|b\|_{\ast} \, \|f\|_{L^{\Phi}}
	\end{align}
	
	In particular, for $b(\cdot):=\ln|\cdot|\in BMO(\Rn)$ and $f:=\chi_{B}$ for all ball $B$, we have
	\begin{align} \label{Vag02}
		\|M_{b,\alpha} \chi_{B}\|_{L^{\Psi}} \le \|b\|_{\ast} \, \|\chi_{B}\|_{L^{\Phi}}.
	\end{align}
	Taking in \eqref{Vag02} $B = B(0,r)$ with $r = (a_1 uv)^{-\frac{1}{n}}$, where $a_r = |B(0,r)|$, $u>0$ and $v>1$, we get
	\begin{align*}
		\|\chi_{B(0,r)}\|_{L^{\Phi}} & = \frac{1}{\Phi^{-1}\big(|B(0,r)|^{-1}\big)} = \frac{1}{\Phi^{-1}\big( a_1^{-1} r^{-n}\big)}
		\\
		& = \frac{1}{\Phi^{-1}(uv)} \le \frac{1}{uv} \, \widetilde{\Phi}^{-1}(uv) .
	\end{align*}
	
	On the other hand, if $ x\notin B(0,r) $ then $ B(0,r)\subset B(x,2|x|) $ because for any $ y\in B(0,r) $ we have
	\begin{align*}
		|x-y|\leq|x|+|y|\leq|x|+r\leq 2|x|.
	\end{align*}
	Also for each $ y\in B(0,r) $, we have
	$$
	b(x)-b(y)\geq \ln\Big(\frac{|x|}{r}\Big).
	$$
	Therefore
	\begin{align*}
		M_{b,\a}\chi_{B(0,r)}(x) & \geq \dfrac{1}{|B(x,2|x|)|^{{1-\frac{\a}{n}}}} \int_{B(x,2|x|)\cap B(0,r)}|b(x)-b(y)|~d(y)
		\\
		& \geq a_{1}^{\frac{\alpha}{n}} r^{n} \, \Big( \dfrac{1}{2|x|}\Big)^{n-\a} \ln\Big(\frac{|x|}{r}\Big).
	\end{align*}
	For $g:=\widetilde{\Phi}^{-1}(u)\chi_{B(0,s)}$ with
	$s:=(a_1u)^{-\frac{1}{n}}$ we obtain
	\begin{align*}
		\int_{\Rn}\widetilde{\Phi}(|g(x)|)~dx = u\, |B(0,s)| = u \, s^{n} \, |B(0,1)| = 1.
	\end{align*}
	
	Since the Luxemburg-Nakano norm is equivalent to the Orlicz norm
	$$
	\|f\|_{L^{\Phi}}^{0} := \sup\left\{\int_{\Rn}|f(x)g(x)|~dx : \|g\|_{L^{\widetilde{\Phi}}} \leq 1\right\}
	$$
	(more precisely, $\|f\|_{L^{\Phi}}\leq \|f\|_{L^{\Phi}}^{0} \leq 2 \|f\|_{L^{\Phi}}$), it follows that
	\begin{align*}
		&\left\| M_{b,\a}\chi_{B(0,r)}\right\|_{L^{\Psi}}^{0}
		\\
		&=\sup\left\lbrace \int_{\Rn} M_{b,\a}\chi_{B(0,r)}(x) \, |g(x)|~dx : \int_{\Rn}
		\widetilde{\Psi}(|g(x)|)~dx \leq 1 \right\rbrace
		\\
		& \geq \widetilde{\Psi}^{-1}(u) \int_{B(0,s)} M_{b,\a}\chi_{B(0,r)}(x)~dx
		\\
		& \geq a_{1}^{\frac{\alpha}{n}} r^{n} \, \widetilde{\Psi}^{-1}(u) \int_{B(0,s)\setminus B(0,r)}
		\Big( \dfrac{1}{2|x|}\Big)^{n-\a} \ln\Big(\frac{|x|}{r}\Big)~dx
		\\
		&=\dfrac{a_{1}^{\frac{\alpha}{n}} \widetilde{\Psi}^{-1}(u)}{2^{n-\a} a_1uv} \int_{r<|x|<s}\dfrac{1}{|x|^{n-\a}}\ln\Big(\frac{|x|}{r}\Big)~dx  ~~~(\mbox{using spherical coordinates})
		\\
		& = \dfrac{\widetilde{\Psi}^{-1}(u)u^{-\frac{\alpha}{n}}}{\alpha^{2}2^{n-\alpha} ~uv}(\alpha \ln v+n v^{-\frac{\alpha}{n}}-n).
	\end{align*}
	Hence, \eqref{Vag02} implies that
	\begin{align*}
		\dfrac{\widetilde{\Psi}^{-1}(u)u^{-\frac{\alpha}{n}}}{\alpha^{2}2^{n-\alpha} ~uv}(\alpha \ln v+n v^{-\frac{\alpha}{n}}-n)  \le \frac{\|\ln|\cdot|\|_{\ast}}{uv} \, \widetilde{\Phi}^{-1}(uv)
	\end{align*}
	for $u>0$ and $v>1$. Thus, taking $v=2 $ we obtain for $ u>0 $
	$$
	u^{-\frac{\a}{n}} \, \widetilde{\Psi}^{-1}(u) \lesssim \widetilde{\Phi}^{-1}(u)
	$$
	or
	$$
	u^{-\frac{\a}{n}} \, \Phi^{-1}(u)  \lesssim \, \Psi^{-1}(u).
	$$
	
\end{proof}
\begin{remark}
	Note that, Theorem \ref{AdamsCommFrCharOrl} was proved in \cite[Theorem 3.8]{GulDerHasMathN}, if in addition the condition
	\begin{equation}\label{adFrCommCharOrl3}
		\sup_{r<t<\infty} \Big(1+\ln \frac{t}{r}\Big) \Phi^{-1}\big(t^{-n}\big) t^{\alpha} \le C r^{\alpha} \Phi^{-1}\big(r^{-n}\big)
	\end{equation}
	for all $r>0$, where $C>0$ does not depend on $r$, is satisfied. In the proof of Theorem \ref{AdamsCommFrCharOrl}  we do not use the condition \eqref{adFrCommCharOrl3}.
\end{remark}

If we take $\Phi(t)=t^{p}$ and $\Psi(t)=t^{q}$ in Theorem \ref{AdamsCommFrCharOrl} we get the following corollary.
\begin{corollary}
	Let $1< p < \infty$, $0<\a <n/p$ and $b\in  BMO(\Rn)$. Then $M_{b,\a}$ is bounded from $L^p(\Rn)$ to $L^q(\Rn)$ if and only if $\frac{1}{q}=\frac{1}{p}-\frac{\a}{n}$.
\end{corollary}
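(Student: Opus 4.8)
The plan is to obtain the corollary as the special case of Theorem \ref{AdamsCommFrCharOrl} in which $\Phi(t)=t^{p}$ and $\Psi(t)=t^{q}$, so that $L^{\Phi}(\Rn)=L^{p}(\Rn)$ and $L^{\Psi}(\Rn)=L^{q}(\Rn)$, as recorded after the definition of the Orlicz space. Everything then reduces to two elementary verifications: that these power functions meet the regularity hypotheses $\Phi,\Psi\in\Delta_{2}\cap\nabla_{2}$ required by the theorem, and that for power functions the scaling condition \eqref{adRieszCharOrl2} is literally equivalent to the relation $\frac{1}{q}=\frac{1}{p}-\frac{\alpha}{n}$.

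First I would dispose of the regularity. For $1<s<\infty$ one has $\Phi(2t)=2^{s}\Phi(t)$, so $t^{s}\in\Delta_{2}$, and choosing $C$ with $C^{s-1}\ge 2$ gives $\Phi(r)\le\frac{1}{2C}\Phi(Cr)$, so $t^{s}\in\nabla_{2}$. Hence $\Phi=t^{p}\in\Delta_{2}\cap\nabla_{2}$ because $1<p<\infty$, and the same holds for $\Psi=t^{q}$ as soon as $1<q<\infty$. The inverse functions are simply $\Phi^{-1}(r)=r^{1/p}$ and $\Psi^{-1}(r)=r^{1/q}$.

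Substituting the inverses, condition \eqref{adRieszCharOrl2} becomes
\begin{equation*}
	r^{\frac{1}{p}-\frac{\alpha}{n}}\le C\,r^{\frac{1}{q}}\qquad(r>0).
\end{equation*}
This is the heart of the matter: since both sides are pure powers of $r$, a single constant can control the inequality on all of $(0,\infty)$ if and only if the exponents agree, that is $\frac{1}{q}=\frac{1}{p}-\frac{\alpha}{n}$ (a larger left exponent breaks it as $r\to\infty$, a smaller one as $r\to 0^{+}$). Granting this relation, the hypothesis $0<\alpha<n/p$ forces $\frac{1}{q}=\frac{1}{p}-\frac{\alpha}{n}\in(0,\tfrac{1}{p})\subset(0,1)$, so $1<q<\infty$ and the membership $\Psi=t^{q}\in\Delta_{2}\cap\nabla_{2}$ is automatic.

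With these pieces assembled both implications are immediate from Theorem \ref{AdamsCommFrCharOrl}. If $\frac{1}{q}=\frac{1}{p}-\frac{\alpha}{n}$, then $q\in(1,\infty)$, condition \eqref{adRieszCharOrl2} holds, and sufficiency gives the boundedness of $M_{b,\alpha}$ from $L^{p}(\Rn)$ to $L^{q}(\Rn)$; conversely, if $M_{b,\alpha}$ is so bounded, the necessity part produces \eqref{adRieszCharOrl2} and hence the exponent relation. I expect the only point needing a word of care to be the homogeneity step---that a power inequality holds for all $r$ precisely when the exponents match---together with noting that the necessity computation in Theorem \ref{AdamsCommFrCharOrl} does not itself invoke $\nabla_{2}$, so the converse applies to the given $q$ before one knows it lies in $(1,\infty)$.
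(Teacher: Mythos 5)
Your proposal is correct and follows exactly the paper's route: the paper derives this corollary in one line by setting $\Phi(t)=t^{p}$ and $\Psi(t)=t^{q}$ in Theorem \ref{AdamsCommFrCharOrl}, which is precisely your plan. Your additional verifications (that powers $t^{s}$, $1<s<\infty$, lie in $\Delta_{2}\cap\nabla_{2}$, that the power-function form of \eqref{adRieszCharOrl2} forces equality of exponents, and that the necessity argument does not itself need $\nabla_{2}$) simply make explicit what the paper leaves to the reader.
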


The following theorem is valid.
\begin{theorem} \label{CommFrMaxCharOr01}
	Let $0<\a<n$, $b\in  L^1_{\rm loc}(\Rn)$ and $\Phi, \Psi$ be Young functions with $\Phi \in  \Delta_2\cap\nabla_2$ and $\Psi \in  \Delta_2\cap\nabla_2$.
	
	$1.~$ If the condition \eqref{adRieszCharOrl2} holds,
	then the condition $b\in BMO(\Rn)$ is sufficient for the boundedness of $M_{b,\a}$ from $L^{\Phi}(\Rn)$ to $L^{\Psi}(\Rn)$.
	
	$2.~$ If $\Psi^{-1}(t) \lesssim \Phi^{-1}(t)t^{-\a/n}$, then the condition $b\in BMO(\Rn)$ is necessary for the boundedness of $M_{b,\a}$ from $L^{\Phi}(\Rn)$ to $L^{\Psi}(\Rn)$.
	
	$3.~$ If $\Psi^{-1}(t) \thickapprox \Phi^{-1}(t)t^{-\a/n}$ holds,
	then the condition $b\in BMO(\Rn)$ is necessary and sufficient for the boundedness of $M_{b,\a}$ from $L^{\Phi}(\Rn)$ to $L^{\Psi}(\Rn)$.
\end{theorem}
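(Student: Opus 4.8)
The plan is to dispatch the three parts in sequence, treating the first as a citation, the third as a combination of the first two, and concentrating the real work on the necessity claim in part~2. Part~1 requires no new argument: under the standing hypotheses $b \in BMO(\Rn)$, $\Phi,\Psi \in \Delta_2\cap\nabla_2$, and condition \eqref{adRieszCharOrl2}, the boundedness of $M_{b,\a}$ from $L^{\Phi}(\Rn)$ to $L^{\Psi}(\Rn)$ is exactly the sufficiency direction already established in Theorem~\ref{AdamsCommFrCharOrl}. For part~3, I would observe that the two-sided equivalence $\Psi^{-1}(t)\approx\Phi^{-1}(t)t^{-\a/n}$ encodes both one-sided inequalities at once: the bound $t^{-\a/n}\Phi^{-1}(t)\lesssim\Psi^{-1}(t)$ is precisely \eqref{adRieszCharOrl2}, so part~1 gives that $b\in BMO$ is sufficient, while the reverse bound $\Psi^{-1}(t)\lesssim\Phi^{-1}(t)t^{-\a/n}$ is the hypothesis of part~2, which gives that $b\in BMO$ is necessary. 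Combining the two yields the stated equivalence.

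The substance is part~2. Assuming $M_{b,\a}$ is bounded from $L^{\Phi}(\Rn)$ to $L^{\Psi}(\Rn)$, I want to produce a uniform bound on the mean oscillations of $b$. The first step is a pointwise lower bound for $M_{b,\a}\chi_B$ on a ball $B=B(x_0,r)$. For $x\in B$ one has $B\subseteq B(x,2r)$, so testing the supremum defining $M_{b,\a}$ at radius $2r$ and then passing to the constant $b_B$ inside the integral (via $\int_B|b(x)-b(y)|\,dy\ge|B|\,|b(x)-b_B|$) yields
\begin{align*}
	M_{b,\a}\chi_B(x)\gtrsim|B|^{\frac{\a}{n}}\,|b(x)-b_B|,\qquad x\in B.
\end{align*}
Restricting the $L^{\Psi}$ norm to $B$ and invoking the boundedness assumption together with Lemma~\ref{charorlc} then gives
\begin{align*}
	|B|^{\frac{\a}{n}}\,\big\||b-b_B|\chi_B\big\|_{L^{\Psi}}\lesssim\|M_{b,\a}\chi_B\|_{L^{\Psi}}\lesssim\|\chi_B\|_{L^{\Phi}}=\frac{1}{\Phi^{-1}(|B|^{-1})}.
\end{align*}

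To convert the $L^{\Psi}$ norm on the left into the $L^1$ average defining $\|b\|_{\ast}$, I would apply the generalized H\"older inequality (Lemma~\ref{oneilhold}) with the complementary Young function $\widetilde{\Psi}$, namely $\int_B|b-b_B|\lesssim\big\||b-b_B|\chi_B\big\|_{L^{\Psi}}\,\|\chi_B\|_{L^{\widetilde{\Psi}}}$, evaluating $\|\chi_B\|_{L^{\widetilde{\Psi}}}=1/\widetilde{\Psi}^{-1}(|B|^{-1})$ again by Lemma~\ref{charorlc}. Setting $t=|B|^{-1}$ and using $\widetilde{\Psi}^{-1}(t)\ge t/\Psi^{-1}(t)$ from \eqref{2.3}, the chain collapses to
\begin{align*}
	\frac{1}{|B|}\int_B|b(x)-b_B|\,dx\lesssim t^{\frac{\a}{n}}\,\frac{\Psi^{-1}(t)}{\Phi^{-1}(t)},\qquad t=|B|^{-1},
\end{align*}
and at this point the part~2 hypothesis $\Psi^{-1}(t)\lesssim\Phi^{-1}(t)t^{-\a/n}$ makes the right-hand side bounded by a constant independent of $B$. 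Taking the supremum over all balls gives $\|b\|_{\ast}<\infty$, i.e. $b\in BMO(\Rn)$.

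I expect the main obstacle to be the bookkeeping of the interplay between $\Psi^{-1}$, its complementary $\widetilde{\Psi}^{-1}$, and the factor $|B|^{\a/n}$, so that the two applications of \eqref{2.3} and Lemma~\ref{charorlc} align exactly with the hypothesis; the pointwise lower bound itself is routine. It is worth noting in passing that the $\Delta_2\cap\nabla_2$ assumptions are not actually needed for this necessity direction---they enter only through the citation to Theorem~\ref{AdamsCommFrCharOrl} used in parts~1 and~3.
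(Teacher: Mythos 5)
Your proposal is correct and follows essentially the same route as the paper: parts 1 and 3 are handled identically (citation of Theorem \ref{AdamsCommFrCharOrl} and combination of the two parts), and your part 2 uses the same ingredients in the same way --- testing $M_{b,\a}$ on $\chi_B$, the bound $\|M_{b,\a}\chi_B\|_{L^{\Psi}}\lesssim\|\chi_B\|_{L^{\Phi}}$, Lemma \ref{charorlc}, Orlicz--H\"older duality with $\widetilde{\Psi}$, inequality \eqref{2.3}, and the hypothesis at $t=|B|^{-1}$. The only cosmetic difference is ordering (you take the pointwise lower bound $M_{b,\a}\chi_B(x)\gtrsim|B|^{\a/n}|b(x)-b_B|$ first and then pass to norms, while the paper bounds the double average $\frac{1}{|B|^2}\int_B\int_B|b(y)-b(z)|\,dz\,dy$ and applies H\"older afterwards), and your closing observation that $\Delta_2\cap\nabla_2$ is not needed for the necessity direction is consistent with the paper's argument.
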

\begin{proof}
	1. The first statement of the theorem follows from the first part of the Theorem \ref{AdamsCommFrCharOrl}.
	
	2. We shall now prove the second part. Suppose that $M_{b,\a}$ is bounded from $L^{\Phi}(\Rn)$ to $L^{\Psi}(\Rn)$. Choose any ball $B=B(x,r)$ in $\Rn$, by \eqref{2.3} and Lemma \ref{charorlc}
	\begin{align*}
		&\frac{1}{|B|} \int_{B}|b(y)-b_{B}|dy  = \frac{1}{|B|} \int_{B} \Big|\frac{1}{|B|} \int_{B} (b(y)-b(z))dz \Big| dy
		\\
		&\le \frac{1}{|B|^2} \int_{B} \int_{B} |b(y)-b(z)|dz dy  = \frac{1}{|B|^{1+\frac{\a}{n}}} \int_{B} \frac{1}{|B|^{1-\frac{\a}{n}}} \int_{B} |b(y)-b(z)| \chi_{_B}(z) dz dy
		\\
		& \le \frac{1}{|B|^{1+\frac{\a}{n}}} \int_{B} M_{b,\a}\big( \chi_{_B}\big)(y) dy \le \frac{2}{|B|^{1+\frac{\a}{n}}} \, \|M_{b,\a}\big( \chi_{_B}\big)\|_{L^{\Psi}}\|\chi_{_B}\|_{L^{\widetilde{\Psi}}}
		\\
		& \le \frac{C}{|B|^{\frac{\a}{n}}} \, \frac{\Psi^{-1}(|B|^{-1})}{\Phi^{-1}(|B|^{-1})} \leq C.
	\end{align*}
	Thus $b\in BMO(\Rn)$.
	
	3. The third statement of the theorem follows from the first and second parts of the theorem.
\end{proof}

\begin{remark}
	Note that, in the case $\Phi(t)=t^{p}$ and $\Psi(t)=t^{q}$ from Theorem \ref{CommFrMaxCharOr01} we get Theorem \ref{rem01} for the operator $M_{b,\a}$.
\end{remark}

The following relations between $[b,M_{\a}]$ and $M_{b,\a}$ are valid (see, for example, \cite{GulDerHasAAMP,ZhWu})\,:

Let $b$ be any non-negative locally integrable function.
$$
|[b,M_{\a}]f(x)| \le M_{b,\a}(f)(x), \qquad x\in\Rn
$$
holds for all $f \in L^1_{\rm loc}(\Rn)$.

If $b$ is any locally integrable function on $\Rn$, then
\begin{align}\label{commaxcom}
	|[b,M_{\a}]f(x)| \leq M_{b,\a}(f)(x)+ 2b^{-}(x)M_{\a}f(x),\qquad x\in\Rn
\end{align}
holds for all $f \in L^1_{\rm loc}(\Rn)$.

By \eqref{commaxcom} and Theorems \ref{AdamsFrMaxCharOrl} and \ref{AdamsCommFrCharOrl} we get the following corollary.
\begin{corollary}\label{OmD01}
	Let $0 < \a<n$, $b\in BMO(\Rn)$, $b^{-}\in L^{\infty}(\Rn)$ and $\Phi,\Psi$ be Young functions with such that $\Phi \in  \Delta_2\cap\nabla_2$ and $\Psi \in  \Delta_2\cap\nabla_2$. If the condition \eqref{adRieszCharOrl2} holds,
	then the operator $[b,M_{\a}]$ is bounded from $L^{\Phi}(\Rn)$ to $L^{\Psi}(\Rn)$.
\end{corollary}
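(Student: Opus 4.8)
The plan is to reduce everything to the pointwise domination \eqref{commaxcom} and then invoke the two boundedness results already established. First I would start from
\begin{align*}
	|[b,M_{\a}]f(x)| \leq M_{b,\a}(f)(x)+ 2b^{-}(x)M_{\a}f(x),\qquad x\in\Rn,
\end{align*}
and pass to the $L^{\Psi}$-norm. Since the Luxemburg--Nakano norm is monotone, I get $\|[b,M_{\a}]f\|_{L^{\Psi}} \le \|M_{b,\a}f + 2b^{-}M_{\a}f\|_{L^{\Psi}}$, and the triangle inequality splits this into the two pieces $\|M_{b,\a}f\|_{L^{\Psi}}$ and $2\|b^{-}M_{\a}f\|_{L^{\Psi}}$. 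The whole argument then amounts to controlling these two terms separately by $\|f\|_{L^{\Phi}}$.

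For the first term, the hypotheses are exactly those of Theorem \ref{AdamsCommFrCharOrl}: we have $0<\a<n$, $b\in BMO(\Rn)$, both $\Phi$ and $\Psi$ in $\Delta_2\cap\nabla_2$, and the assumed condition \eqref{adRieszCharOrl2}. Hence the sufficiency part of that theorem gives directly $\|M_{b,\a}f\|_{L^{\Psi}} \lesssim \|b\|_{\ast}\,\|f\|_{L^{\Phi}}$. For the second term I would use that $b^{-}\in L^{\infty}(\Rn)$ to pull the multiplier out of the norm, writing $\|b^{-}M_{\a}f\|_{L^{\Psi}} \le \|b^{-}\|_{L^{\infty}}\,\|M_{\a}f\|_{L^{\Psi}}$. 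Because $\Phi\in\nabla_2$ and \eqref{adRieszCharOrl2} holds, the strong-type part of Theorem \ref{AdamsFrMaxCharOrl} applies and yields $\|M_{\a}f\|_{L^{\Psi}} \lesssim \|f\|_{L^{\Phi}}$. Adding the two estimates gives $\|[b,M_{\a}]f\|_{L^{\Psi}} \lesssim \big(\|b\|_{\ast}+\|b^{-}\|_{L^{\infty}}\big)\,\|f\|_{L^{\Phi}}$, which is the claimed boundedness.

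The step that carries the real content is not a difficulty of this corollary at all but rather the place where the two deep inputs enter: the commutator estimate of Theorem \ref{AdamsCommFrCharOrl} and the fractional-maximal estimate of Theorem \ref{AdamsFrMaxCharOrl}. The only genuinely new point to watch is the role of the extra hypothesis $b^{-}\in L^{\infty}(\Rn)$: without it the second summand $2b^{-}(x)M_{\a}f(x)$ cannot be absorbed, since $b^{-}$ need not be a bounded multiplier on $L^{\Psi}(\Rn)$, and the pointwise bound \eqref{commaxcom} (valid for general locally integrable $b$) would be useless. With $b^{-}\in L^{\infty}$ in hand the argument is routine, so I expect no serious obstacle; the main care is simply to verify that the hypotheses of the two invoked theorems are met, in particular that $\Phi\in\nabla_2$ guarantees the strong-type (rather than merely weak-type) bound for $M_{\a}$.
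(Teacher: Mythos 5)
Your proposal is correct and follows exactly the paper's own route: the paper derives this corollary precisely by combining the pointwise bound \eqref{commaxcom} with Theorem \ref{AdamsCommFrCharOrl} (for the $M_{b,\a}f$ term) and Theorem \ref{AdamsFrMaxCharOrl} (for the $b^{-}M_{\a}f$ term, using $b^{-}\in L^{\infty}(\Rn)$). Your write-up merely makes explicit the norm splitting and hypothesis checks that the paper leaves implicit.
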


\end{document}